\newcommand{\C}{\mathbb{C}}
\newcommand{\Q}{\mathbb{Q}}
\newcommand{\Z}{\mathbb{Z}}
\newcommand{\ra}{\rightarrow}
\newtheorem{thm}{Theorem}
\newtheorem{prop}[thm]{Proposition}
\newtheorem{lem}[thm]{Lemma}
\newtheorem{cor}[thm]{Corollary}
\newtheorem{conj}[thm]{Conjecture}
\theoremstyle{remark}
\newtheorem{rk}[thm]{Remark}
\title{Remarks on the Lefschetz standard conjecture and hyperkähler varieties}
\author{François Charles\\
DMA\\
\'Ecole Normale Sup\'erieure\\
45, rue d'Ulm\\
75230 Paris Cedex 05\\
FRANCE\\
francois.charles@ens.fr
}
\date{}
\begin{document}

\maketitle

\paragraph{Abstract.}
We study the Lefschetz standard conjecture on a smooth complex projective variety $X$. In degree $2$, we reduce it to a local statement concerning local deformations of vector bundles on $X$. When $X$ is hyperk\"ahler, we show that the existence of nontrivial deformations of stable hyperholomorphic bundles implies the Lefschetz standard conjecture in codimension $2$.

\section{Introduction}

In the fundamental paper \cite{Gr69} of 1968, Grothendieck states a series of conjectures concerning the existence of certain algebraic cycles on smooth projective algebraic varieties over an algebraically closed ground fields. Those are known as the standard conjectures. In particular, given such a variety $X$ of dimension $n$, the Lefschetz standard conjecture predicts the existence of self-correspondences on $X$ that give an inverse to the operations
$$H^{k}(X) \ra H^{2n-k}(X)$$
given by the cup-product $n-k$ times with a hyperplane section for all $k\leq n$. Here $H^*(X)$ stands for any Weil cohomology theory on $X$, e.g. singular cohomology if $X$ is defined over $\C$, or $l$-adic \'etale cohomology in characteristic different from $l$. If we can invert the morphism $H^{k}(X) \ra H^{2n-k}(X)$ using self-correspondences on $X$, we say that the Lefschetz conjecture holds in degree $k$.

Let us now, and for the rest of the paper, work over $\C$. The Lefschetz standard conjecture then implies the other ones and has strong theoretical consequences. For instance, it implies that numerical and homological equivalence coincide, and that the category of pure motives for homological equivalence is semisimple. We refer to \cite{Kl68} and \cite{Kl91} for more detailed discussions. The Lefschetz standard conjecture for varieties which are fibered in abelian varieties over a smooth curve also implies the Hodge conjecture for abelian varieties as shown by Yves Andr\'e in \cite{An96}. Grothendieck actually writes in the aforementioned paper that ``alongside the resolution of singularities, the proof of the standard conjectures seems to [him] to be the most urgent task in algebraic geometry''.

Though the motivic picture has tremendously developed since Grothendieck's statement of the standard conjectures, very little progress has been made in their direction. The Lefschetz standard conjecture is known for abelian varieties, see \cite{Kl68} and in degree $1$ where it reduces to the Hodge conjecture for divisors. Aside from examples obtained by taking products and hyperplane sections, those seem to be the only two cases where a proof is known.

\bigskip

In this paper, we want to investigate further the geometrical content of the Lefschetz standard conjecture, and try to give insight into the specific case of hyperk\"ahler varieties. The original form of the Lefschetz standard conjecture for a variety $X$ predicts the existence of specific algebraic cycles in the product $X\times X$. Those cycles can be considered as family of cycles on $X$ parametrized by $X$ itself. Our first remark is that the conjecture actually reduces to a general statement about the existence of large families of algebraic cycles in $X$ parametrized by any smooth quasi-projective base. For this, we use Hodge theory on $X$.

It turns out that for those families to give a positive answer to the conjecture, it is enough to control the local variation of the family of cycles considered. Let us give a precise statement. Let $X$ be a smooth projective variety, $S$ a smooth quasi-projective variety, and let $Z\in CH^k(X\times S)$ be a family of codimension $k$ cycles in $X$ parametrized by $S$. Let $\mathcal T_S$ be the tangent sheaf of $S$. Using the Leray spectral sequence for the projection onto $S$ and constructions from Griffiths and Voisin in \cite{IVHS3}, \cite{Vo88}, we construct a map
$$\phi_Z : \bigwedge^k \mathcal T_S \ra H^k(X, \mathcal O_X)\otimes \mathcal O_S,$$
We then get the following result, which we state here only in degree $2$ for simplicity, but see section 2.
\begin{thm}
 Let $X$ be a smooth projective variety.
Then the Lefschetz conjecture is true in degree $2$ for $X$ if and only if there exists a smooth quasi-projective variety $S$, a codimension $2$ cycle $Z$ in $CH^2(X\times S)$ and a point $s\in S$ such that the morphism

$$\phi_{Z,s} : \bigwedge^2 \mathcal T_{S,s} \ra H^2(X, \mathcal O_X)$$
considered above for $k=2$, is surjective.
\end{thm}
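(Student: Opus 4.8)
The plan is to translate the Lefschetz conjecture in degree $2$ into the algebraicity of a single Hodge class on $X\times X$ and then to match that algebraicity with the surjectivity of $\phi_{Z,s}$. First I would recall the standard Hodge-theoretic reformulation: by the Hard Lefschetz theorem $L^{n-2}\colon H^2(X,\Q)\to H^{2n-2}(X,\Q)$ is an isomorphism of Hodge structures, and since the action of a correspondence on $\mathrm{Hom}(H^{2n-2}(X),H^2(X))$ depends only on its Künneth component in $H^2(X)\otimes H^2(X)$, the Lefschetz conjecture in degree $2$ for $X$ --- call it $\mathrm{Lef}^2(X)$ --- is equivalent to the algebraicity of the Hodge class $\lambda\in H^2(X)\otimes H^2(X)\subset H^4(X\times X)$ corresponding, via Poincaré duality, to $(L^{n-2})^{-1}$. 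Writing $H^2(X,\Q)=\mathrm{NS}_\Q(X)\oplus T(X)$ for the orthogonal splitting into the Néron--Severi and transcendental parts, correspondences $D\times D'$ with $D,D'$ divisors already produce enough algebraic classes to recover the $\mathrm{NS}$-component of $\lambda$ (Hodge index theorem), so one is reduced to the transcendental part: $\mathrm{Lef}^2(X)$ holds iff the restriction of $(L^{n-2})^{-1}$ to the summand of $H^{2n-2}(X)$ dual to $T(X)$, viewed as landing in $T(X)$, is induced by an algebraic cycle.

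The Hodge-theoretic inputs I would use for the ``if'' direction are the semisimplicity of the category of polarizable $\Q$-Hodge structures, which splits $T(X)$ into isotypic pieces each with nonzero $(0,2)$-component (exactly what ``transcendental'' means), and the positivity of the Hodge--Riemann bilinear relations, which is what lets one upgrade ``having algebraic classes whose images span $H^{0,2}(X)$'' to ``having an algebraic isomorphism''. Concretely, given $(S,Z,s)$ with $\phi_{Z,s}$ surjective, I would form self-correspondences of $X$ by composing $Z$ with its transpose over $S$ --- after cutting $S$ down by general hyperplane sections, or inserting auxiliary correspondences on $S\times S$, so that the composite lands in $CH^2(X\times X)_\Q$. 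The resulting classes $\Gamma$ act on $H^{2n-2}(X)\to H^2(X)$ by operators that are symmetric and positive for the polarization, being composites of ``the adjoint of a map followed by that map''. The construction of $\phi_Z$ from the Leray spectral sequence of $X\times S\to S$ and the Griffiths--Voisin infinitesimal invariants should identify $\phi_{Z,s}$, up to the natural maps, with a piece of the cohomological action of such $\Gamma$'s into $H^{0,2}(X)=H^2(X,\mathcal O_X)$; surjectivity of $\phi_{Z,s}$ then says that the images of these algebraic operators span a sub-Hodge-structure of $T(X)$ containing $H^{0,2}(X)$, hence all of $T(X)$, and their symmetry and positivity force a suitable $\Q$-linear combination to be an isomorphism, giving $\mathrm{Lef}^2(X)$.

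For the converse I would start from a Lefschetz correspondence $\Gamma\in CH^2(X\times X)_\Q$ and spread it out: take $S$ to be a sufficiently high self-power $X^m$ --- so that $\bigwedge^2\mathcal T_{S,s}$ is large enough to surject onto $H^2(X,\mathcal O_X)$ even when $h^{2,0}(X)$ is big --- with $Z$ an appropriate combination of pullbacks of $\Gamma$ along the projections, and then check that $\phi_Z$ is surjective at a general point by computing it in terms of the Künneth components of $[\Gamma]$ and invoking the non-degeneracy of the canonical-type maps attached to the holomorphic $2$-forms of $X$; here the point is simply that $\Gamma$ being an isomorphism makes the relevant Künneth component non-degenerate.

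The step I expect to be the main obstacle is the comparison in the ``if'' direction: identifying the locally defined datum $\phi_{Z,s}$, which comes out of the Leray spectral sequence and the Griffiths--Voisin machinery, with the genuine cohomological action of globally defined self-correspondences of $X$, and verifying that the symmetry and positivity needed for the Hodge--Riemann argument survive this identification. By comparison, the reduction to the transcendental part, the treatment of the Néron--Severi part, and the semi-continuity and dimension bookkeeping in the ``only if'' direction should be routine.
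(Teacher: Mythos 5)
Your global Hodge-theoretic scaffolding matches the paper's: the ``compose $Z$ with a twisted transpose and invoke Hodge--Riemann positivity to turn a surjective correspondence into a bijective one'' step is exactly the paper's Proposition \ref{surj} (where the auxiliary correspondence on $S\times S$ is $L^{l-k}\circ s$, $s$ the sign involution of the Lefschetz decomposition, algebraic by Lemma \ref{proj}), and your reduction to the transcendental part via the Lefschetz $(1,1)$ theorem is the $k=2$ case of Corollary \ref{tr}. But the step you explicitly defer as ``the main obstacle'' --- identifying the pointwise datum $\phi_{Z,s}$ with the cohomological action of the correspondence --- is precisely the new content of this theorem, and leaving it unproved is a genuine gap: without it nothing connects the hypothesis (surjectivity of a fibrewise linear map $\bigwedge^2\mathcal T_{S,s}\to H^2(X,\mathcal O_X)$ at one point) to any statement about $[Z]_*$ on $H^*(S,\Q)$. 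The paper's resolution is short but essential: first compactify $S$ and take the closure of $Z$ (your ``cut $S$ down by hyperplane sections'' does not address the fact that $S$ is only quasi-projective, and every global ingredient --- polarized Hodge structures on $H^*(S)$, the transpose correspondence, Serre duality --- requires $S$ projective); then observe that $\phi_Z$ is a global section of $\Omega^2_S\otimes H^2(X,\mathcal O_X)$ whose Serre dual is exactly the Hodge component $H^l(S,\Omega^{l-2}_S)\to H^2(X,\mathcal O_X)$ of $[Z]_*$. Non-degeneracy of the Serre pairing is then what promotes surjectivity of $\phi_{Z,s}$ at a single point to surjectivity of the global map, after which Corollary \ref{tr} and Proposition \ref{surj} take over. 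Note also that your worry about ``symmetry and positivity surviving the identification'' is misplaced: in the paper the positivity argument lives entirely on rational cohomology of the compactified $S$ and never interacts with $\phi_{Z,s}$; the only bridge needed is the Serre-duality identification just described.

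Two smaller points. Your opening reformulation of $\mathrm{Lef}^2(X)$ as the algebraicity of the class $\lambda$ corresponding to $(L^{n-2})^{-1}$ is stronger than the conjecture as stated (it would require the relevant K\"unneth component of the cycle to be algebraic); this is harmless since you only use the easy implication, but the paper avoids it entirely by using Kleiman's observation (Lemma 6) that any algebraic correspondence inducing a bijection $H^{2n-2}\to H^2$ suffices, via Cayley--Hamilton. For the ``only if'' direction, surjectivity of the global map $H^l(S,\Omega^{l-2}_S)\to H^2(X,\mathcal O_X)$ does not give surjectivity of $\phi_{Z,s}$ at any single point; the paper's fix is to pick finitely many points $s_1,\dots,s_r$ whose images jointly span and replace $(S,Z)$ by $(S^r,\sum_i p_i^*Z)$ --- your ``general point of $X^m$'' needs an argument of this kind to be made precise.
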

This variational approach to the existence of algebraic cycles can be compared to the study of semi-regularity maps as in \cite{Bl72}.

In the following section, we give an explicit formula for $\phi_Z$ in case the cycle $Z$ is given by the Chern classes of a family of vector bundles $\mathcal E$ on $X\times S$. In this situation, we show that $\phi_Z$ is expressed very simply in terms of the Kodaira-Spencer map. Indeed, $\mathcal T_{S,s}$ maps to the space $\mathrm{Ext}^1(\mathcal{E}_s, \mathcal{E}_s)$. We then have a Yoneda product
$$\mathrm{Ext}^1(\mathcal{E}_s, \mathcal{E}_s)\times \mathrm{Ext}^1(\mathcal{E}_s, \mathcal{E}_s)\ra \mathrm{Ext}^2(\mathcal{E}_s, \mathcal{E}_s)$$
and a trace map
$$\mathrm{Ext}^2(\mathcal{E}_s, \mathcal{E}_s)\ra H^2(X,\mathcal O_X).$$
We show that we can express $\phi_{Z,s}$ in terms of the composition
$$\phi_2(\mathcal E) : \bigwedge^2 \mathcal T_{S,s} \ra H^2(X, \mathcal O_X)$$ of those two maps, and we get the following theorem.

\begin{thm}
Let $X$ be a smooth projective variety. Then the Lefschetz conjecture is true in degree $2$ for $X$ if there exists a smooth quasi-projective variety $S$, a vector bundle $\mathcal E$ over $X\times S$, and a point $s\in S$ such that the morphism
\begin{equation}\label{deuxf}
\phi_2(\mathcal E)_s : \bigwedge^2 \mathcal T_{S,s} \ra H^2(X, \mathcal O_X)
\end{equation}
induced by the composition above is surjective.
\end{thm}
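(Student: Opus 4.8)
The plan is to deduce Theorem 4 (the vector-bundle statement) from Theorem 1 (the abstract cycle statement) by taking $Z$ to be a suitable combination of Chern classes of $\mathcal E$ and checking that the map $\phi_{Z,s}$ of Theorem 1 agrees, up to the features that matter, with the map $\phi_2(\mathcal E)_s$ built from the Kodaira--Spencer class and the Yoneda/trace pairing. So the first step is to make precise the passage from a family of vector bundles $\mathcal E$ on $X\times S$ to a family of codimension-$2$ cycles: I would set $Z = \mathrm{ch}_2(\mathcal E) = \tfrac12(c_1(\mathcal E)^2 - 2c_2(\mathcal E)) \in CH^2(X\times S)_{\mathbb Q}$, or more simply work with $c_2(\mathcal E)$ after twisting $\mathcal E$ by a line bundle pulled back from $S$ so that $c_1(\mathcal E_s)$ is irrelevant to the computation at $s$; rationality of coefficients is harmless since the Lefschetz conjecture is about the existence of an inverse correspondence and we may clear denominators.

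The second step, which is the technical heart, is to identify $\phi_{\mathrm{ch}_2(\mathcal E),s}$ with $\phi_2(\mathcal E)_s$ as maps $\bigwedge^2 \mathcal T_{S,s}\to H^2(X,\mathcal O_X)$. Here I would use the explicit description of $\phi_Z$ promised in the excerpt (``an explicit formula for $\phi_Z$ in case the cycle $Z$ is given by the Chern classes of a family of vector bundles''): the construction of $\phi_Z$ goes through the Leray spectral sequence for $X\times S\to S$ and the Griffiths--Voisin infinitesimal invariant machinery, and on the other side the Atiyah class of $\mathcal E$ over $X\times S$ decomposes, relative to the product structure, into the relative Atiyah class (the fibrewise one, governing the cycle $\mathrm{ch}_k(\mathcal E_s)$ inside $X$) and the Kodaira--Spencer component (an element of $H^1(X,\mathrm{End}\,\mathcal E_s)\otimes \Omega^1_{S,s}$). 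Cup-squaring the total Atiyah class and applying the trace, then projecting onto the Leray piece $H^0(S,\Omega^2_S)\otimes H^k(X,\mathcal O_X)$-type term that computes $\phi_Z$, one sees that the only surviving contribution at the point $s$ is precisely the composite of two Kodaira--Spencer classes in $\mathrm{Ext}^1$, their Yoneda product in $\mathrm{Ext}^2(\mathcal E_s,\mathcal E_s)$, followed by $\mathrm{Tr}:\mathrm{Ext}^2(\mathcal E_s,\mathcal E_s)=H^2(X,\mathrm{End}\,\mathcal E_s)\to H^2(X,\mathcal O_X)$. This is exactly the definition of $\phi_2(\mathcal E)_s$, so the two maps coincide (up to the normalizing rational constant from $\mathrm{ch}_2$ versus $c_2$, which does not affect surjectivity).

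The third step is then formal: if $\phi_2(\mathcal E)_s$ is surjective, so is $\phi_{Z,s}$ for $Z=\mathrm{ch}_2(\mathcal E)$; but $S$ is smooth quasi-projective and $Z\in CH^2(X\times S)_{\mathbb Q}$, so Theorem 1 (which is insensitive to $\mathbb Q$-coefficients) applies and gives the Lefschetz conjecture in degree $2$ for $X$. Note that the implication here is one-directional, matching the ``if'' in the statement; there is no need for a converse, since not every family of cycles arises from a vector bundle.

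The main obstacle I anticipate is the bookkeeping in step two: one must carefully track how the Atiyah class on the product $X\times S$ splits under $\Omega^1_{X\times S}=\mathrm{pr}_X^*\Omega^1_X\oplus\mathrm{pr}_S^*\Omega^1_S$, how cup product and trace interact with the Leray filtration for $\mathrm{pr}_S$, and why all the ``mixed'' and ``purely fibrewise'' terms either vanish at $s$ or fail to land in the target $H^2(X,\mathcal O_X)\otimes\mathcal O_S$ piece that defines $\phi_Z$ — the point being that $\phi_Z$ records the image of $\bigwedge^2\mathcal T_S$ under the iterated connecting maps, which on the bundle side is literally iterated cup with Kodaira--Spencer. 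Getting the signs and the factor from $\mathrm{ch}_2$ right, and citing the precise form of the explicit formula for $\phi_Z$ established in the preceding section, is where care is needed; the conceptual content is that the Griffiths-type infinitesimal invariant of a family of Chern classes is computed by the Kodaira--Spencer class via Atiyah's description of Chern classes.
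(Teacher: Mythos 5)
Your proposal is correct and takes essentially the same route as the paper: set $Z=\mathrm{ch}_2(\mathcal E)$, identify $\phi_{\mathrm{ch}_2(\mathcal E),s}$ with $\phi_2(\mathcal E)_s$ (up to the factor $2!$) by processing the square of the Atiyah class in two ways --- trace of its powers computes the Chern character, while its $\Omega^1_S$-component is the Kodaira--Spencer map --- and then invoke the cycle-theoretic criterion of Theorem \ref{vari}, which is unconditional in degree $2$. This is precisely the content of the commutative diagram in the paper's Section 3, so there is nothing substantive to add.
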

The main interest of this theorem is that it makes it possible to only use first-order computations to check the Lefschetz standard conjecture, which is global in nature, thus replacing it by a local statement on deformations of $\mathcal E$. Of course, when one wants to ensure that there exists a vector bundle over $X$ that has a positive-dimensional family of deformations, the computation of obstructions is needed, which involves higher-order computations. However, once a family of vector bundles is given, checking the surjectivity condition of the theorem involves only first-order deformations.

\bigskip

The last part of the paper is devoted to applications of the previous results to hyperk\"ahler varieties. We will recall general properties of those and their hyperholomorphic bundles in section 4. Those varieties have $h^{2,0}=1$, which makes the last criterion easier to check. In the case of $2$-dimensional hyperk\"ahler varieties, that is, in the case of K3 surfaces, Mukai has investigated in \cite{Mu84} the $2$-form on the moduli space of some stable sheaves given by (\ref{deuxf}) and showed that it is nondegenerate. In particular, it is nonzero. Of course, the case of surfaces is irrelevant in our work, but we will use Verbitsky's theory of hyperholomorphic bundles on hyperholomorphic varieties as presented in \cite{Ver96}. In his work, Verbitsky extends the work of Mukai to higher dimensions and shows results implying the nondegeneracy of the the form (\ref{deuxf}) in some cases. Using those, we are able to show that the existence of nonrigid hyperholomorphic bundles on a hyperk\"ahler variety is enough to prove the Lefschetz standard conjecture in degree $2$. Indeed, we get the following.

\begin{thm}\label{nd}
Let $X$ be a projective irreducible hyperk\"ahler variety, and let $\mathcal E$ be a stable hyperholomorphic bundle on $X$. Assume that $\mathcal E$ has a nontrivial positive-dimensional family of deformations. Then the Lefschetz conjecture is true in degree $2$ for $X$.
\end{thm}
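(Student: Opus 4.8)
The plan is to apply the vector-bundle criterion established above, the one producing the map (\ref{deuxf}), to a family coming from the moduli space of hyperholomorphic bundles. The decisive simplification is that an irreducible hyperk\"ahler variety $X$ has $h^{2,0}(X)=1$, equivalently $\dim_{\C} H^2(X,\mathcal O_X)=1$, so that to conclude it suffices to produce a smooth quasi-projective $S$, a vector bundle $\mathcal E$ on $X\times S$, and a point $s\in S$ for which the map $\phi_2(\mathcal E)_s\colon \bigwedge^2\mathcal T_{S,s}\to H^2(X,\mathcal O_X)$ is merely \emph{nonzero}. For $S$ I would take a neighbourhood of the point $[\mathcal E]$ in the moduli space $M$ of stable hyperholomorphic bundles on $X$ with the Chern classes of $\mathcal E$. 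Since a universal family exists a priori only as a twisted sheaf, and local freeness is an open condition, I would first replace $M$ by an \'etale neighbourhood $S\to M$ of $[\mathcal E]$ over which an honest universal bundle exists, still denoted $\mathcal E$, locally free on every fibre and restricting to the given bundle at $s$. Neither this \'etale base change nor a further twist replacing $\mathcal E$ by $\mathcal E\otimes p_S^{*}L$ for a line bundle $L$ on $S$ alters the endomorphism bundles of the fibres, hence none of them alters the groups $\mathrm{Ext}^{\bullet}(\mathcal E_s,\mathcal E_s)$, the Yoneda product, the trace map, or the Kodaira--Spencer map; thus $\phi_2(\mathcal E)_s$ depends only on $(M,[\mathcal E])$, and by construction it is the alternating form obtained from $\bigwedge^2$ of the Kodaira--Spencer isomorphism $\mathcal T_{S,s}\xrightarrow{\sim}\mathrm{Ext}^1(\mathcal E_s,\mathcal E_s)$, the Yoneda pairing into $\mathrm{Ext}^2(\mathcal E_s,\mathcal E_s)$, and the trace $\mathrm{Ext}^2(\mathcal E_s,\mathcal E_s)\to H^2(X,\mathcal O_X)$.

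The heart of the proof is then Verbitsky's theory of hyperholomorphic bundles \cite{Ver96}, which extends Mukai's computation for K3 surfaces \cite{Mu84} to arbitrary irreducible hyperk\"ahler varieties: at the stable point $[\mathcal E]$ the moduli space $M$ is smooth, it carries a holomorphic symplectic structure, and the value of its symplectic form at a point $[\mathcal F]$ is precisely the pairing on $\mathrm{Ext}^1(\mathcal F,\mathcal F)$ given by the Yoneda product followed by the trace, that is, under the identification $\dim_{\C} H^2(X,\mathcal O_X)=1$ it equals $\phi_2(\mathcal E)_{[\mathcal F]}$ up to a nonzero scalar. By hypothesis $\mathcal E$ admits a nontrivial positive-dimensional family of deformations, which I read as saying that the component of $M$ through $[\mathcal E]$ has positive dimension; being holomorphic symplectic, it then has even dimension $\geq 2$, and a nondegenerate alternating form on a vector space of dimension at least $2$ is in particular nonzero. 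Hence $\phi_2(\mathcal E)_s$ is a nonzero, and therefore surjective, map onto the line $H^2(X,\mathcal O_X)$, and the vector-bundle criterion yields the Lefschetz standard conjecture in degree $2$ for $X$.

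The step I expect to be the main obstacle is the identification invoked at the start of the second paragraph: one has to match the map $\phi_Z$ constructed in Sections 2 and 3 out of the Leray spectral sequence of $X\times S\to S$ and the Griffiths--Voisin description of the derivative of the period map with Verbitsky's holomorphic symplectic form on the moduli of hyperholomorphic bundles, which is obtained by differential-geometric means; one must check that these two $2$-forms agree up to a nonzero constant under Kodaira--Spencer, and that Verbitsky's nondegeneracy results---or, since $h^{2,0}(X)=1$, just the weaker non-vanishing that we actually use---genuinely apply to the given stable hyperholomorphic bundle $\mathcal E$. A more routine but still delicate point is the handling of the universal family: one must ensure that the moduli space of hyperholomorphic bundles, a priori only a complex-analytic space carrying possibly only a twisted universal sheaf, can be replaced near $[\mathcal E]$ by a smooth quasi-projective $S$ with an honest locally free universal family, using that a stable hyperholomorphic bundle is in particular slope-stable and so defines a point of a quasi-projective moduli space of semistable sheaves.
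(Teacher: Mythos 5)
Your proposal follows essentially the same route as the paper: both invoke Verbitsky's theorem that the reduced moduli space $S=Spl(\mathcal E,X)$ of stable hyperholomorphic deformations carries a holomorphic symplectic form coinciding with the Yoneda-plus-trace pairing $\eta_S$, deduce from nondegeneracy on a positive-dimensional tangent space that $\phi_2$ is nonzero, hence surjective onto the line $H^2(X,\mathcal O_X)$, and conclude by Theorem \ref{crit}. The only cosmetic difference is that the paper does not claim smoothness of the moduli space at $[\mathcal E]$ itself (Verbitsky's hyperk\"ahler structure lives on a possibly singular reduction); it instead chooses a smooth point $s$ of $S$ with positive-dimensional tangent space and an \'etale quasi-projective chart $S'\to S$ through it, which is the safer way to phrase your \'etale-neighbourhood step.
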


In a slightly different direction, recall that the only known hyperk\"ahler varieties, except in dimension $6$ and $10$, are the two families constructed by Beauville in \cite{Be83} which are the small deformations of Hilbert schemes of points on a K3 surface or of generalized Kummer varieties. For those, the Lefschetz standard conjecture is easy -- see \cite{Ar06} for a general discussion -- as their cohomology comes from that of a surface. We get the following.

\begin{thm}\label{stdef}
Let $n$ be a positive integer. Assume that for every K3 surface $S$, there exists a stable hyperholomorphic sheaf $\mathcal E$ with a nontrivial positive-dimensional family of deformations on the Hilbert scheme $S^{[n]}$ parametrizing subschemes of $S$ of length $n$. Then the Lefschetz conjecture is true in degree $2$ for any projective deformation of $S^{[n]}$. The same result holds for generalized Kummer varieties.
\end{thm}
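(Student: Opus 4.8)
The plan is to feed the hypothesis into Theorem~\ref{nd} and then spread the resulting correspondences over the whole deformation family of $S^{[n]}$. For each K3 surface $S$ the hypothesis gives a stable hyperholomorphic \emph{sheaf} $\mathcal E$ on $S^{[n]}$ with a positive-dimensional family of deformations, whereas Theorem~\ref{nd} is stated for bundles; a routine reduction to the locally free case — pass to the reflexive hull of $\mathcal E$, which is again hyperholomorphic by Verbitsky's theory, or apply the variational criterion directly to the codimension-two cycle $c_2(\mathcal E)$ — disposes of this point. Granting it, Theorem~\ref{nd} applies with $X=S^{[n]}$ and yields the Lefschetz conjecture in degree $2$ for $S^{[n]}$, for \emph{every} K3 surface $S$.

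Next I would pass from the Hilbert schemes to an arbitrary projective deformation $Y$. The key input is again Verbitsky's theory: on a hyperk\"ahler manifold a stable hyperholomorphic bundle, together with its moduli space of deformations, is preserved under the twistor deformations of the hyperk\"ahler structure, and the moduli space of stable hyperholomorphic bundles is itself hyperk\"ahler. Deforming from $S^{[n]}$ to $Y$ inside the (a priori only $C^\infty$) deformation space therefore produces a hyperholomorphic bundle $\mathcal E_Y$ on $Y$; since $Y$ is projective, $\mathcal E_Y$ is algebraic by GAGA, it is again stable for a suitable polarization, and its space of deformations stays positive-dimensional because that dimension is read off from the hyperk\"ahler moduli of bundles, which is carried along the twistor lines. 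Theorem~\ref{nd} applied directly to $Y$ then gives the Lefschetz conjecture in degree $2$ for $Y$. A single K3 surface would in principle suffice here; the reason to assume the existence of $\mathcal E$ for \emph{every} $S$ is that an arbitrary projective deformation of $S^{[n]}$ need only be linked to some $S^{[n]}$ through a chain of possibly non-algebraic hyperk\"ahler manifolds, and having a hyperholomorphic bundle available on every Hilbert scheme, combined with the deformation invariance of the degree-$2$ Lefschetz conjecture along connected smooth projective families — where the difference of the correspondence-induced endomorphism of $R^2\pi_*\Q$ with the identity is a flat section of a local system that vanishes at one point — makes the passage to all projective deformations unconditional.

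The main obstacle is precisely the transport step of the second paragraph: one has to ensure that not merely the bundle but the \emph{positive dimension} of its family of deformations survives the passage through the full, a priori non-algebraic, deformation space of the hyperk\"ahler manifold, and that the resulting object on $Y$ is a genuine algebraic, stable, hyperholomorphic bundle carrying a positive-dimensional family of deformations — this is exactly where Verbitsky's results on hyperholomorphic sheaves and on the hyperk\"ahler structure of their moduli must be used with care, for instance to rule out jumping of the relevant dimension. Finally, the generalized Kummer case is identical: one replaces the Hilbert scheme $S^{[n]}$ of a K3 surface by the generalized Kummer variety $K_n(A)$ of an abelian surface $A$, which is again a projective irreducible hyperk\"ahler variety, and repeats the argument verbatim.
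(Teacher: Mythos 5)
Your proposal is correct and follows essentially the same route as the paper: transport the hyperholomorphic bundle and its positive-dimensional moduli along twistor deformations via Verbitsky's results (Theorem \ref{defmod}), apply Theorem \ref{nd}, and spread the conclusion over the moduli space of projective deformations. The paper makes your ``linking'' step precise with Lemma \ref{twist}, which shows that a single twistor line connects any small projective deformation of $S^{[n]}$ to the Douady space of $n$ points on some (possibly non-projective) K3 surface, after first reducing to an open set of the moduli by the standard Hilbert scheme argument.
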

Both those results could be applied taking $\mathcal E$ to be the tangent sheaf of the variety considered, in case it has nontrivial deformations.

Those results fit well in the -- mostly conjectural -- work of Verbitsky as exposed in \cite{Ver99} predicting the existence of large moduli spaces of hyperholomorphic bundles. Unfortunately, we were not able to exhibit bundles satisfying the hypotheses of the theorems.

\bigskip

Varieties are defined to be reduced and irreducible. All varieties and schemes are over the field of complex numbers.

\paragraph{Acknowledgements.}It is a great pleasure to thank Claire Voisin for her help and support, as well as many enlightening discussions during the writing of this paper. I am grateful to Eyal Markman for kindly explaining me the results of \cite{Ma10}. I would also like to thank Daniel Huybrechts for pointing out the relevance of Verbitsky's results and for the interesting discussions we had around the manuscript during his nice invitation to the university of Bonn, as well as Burt Totaro and the referee for many useful comments.

\section{General remarks on the Lefschetz standard conjecture}

This section is devoted to some general remarks on the Lefschetz standard conjecture. Although some are well-known to specialists, we include them here for ease of reference. Let us first recall the statement of the conjecture.

 Let $X$ be a smooth projective variety of dimension $n$ over $\C$. Let $\xi \in H^2(X, \Q)$ be the cohomology class of a hyperplane section of $X$. According to the hard Lefschetz theorem, see for instance \cite{Vo02}, Chapter 13, for all $k\in\{0, \ldots, n\}$, cup-product with $\xi^{n-k}$ induces an isomorphism
$$\cup \xi^{n-k} : H^{k}(X, \Q)\ra H^{2n-k}(X, \Q).$$

The Lefschetz standard conjecture was first stated in \cite{Gr69}, conjecture $B(X)$. It is the following.

\begin{conj}
Let $X$ and $\xi$ be as above. Then for all $k\in\{0, \ldots, n\}$, there exists an algebraic cycle $Z$ of codimension $k$ in the product $X\times X$ such that the correspondence
$$[Z]_* : H^{2n-k}(X, \Q)\ra H^{k}(X, \Q)$$
is the inverse of $\cup \xi^{n-k}$.
\end{conj}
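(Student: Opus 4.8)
This is Grothendieck's standard conjecture of Lefschetz type, $B(X)$, which is open in general; the realistic plan is therefore not to prove it outright but to reduce it to a first-order, deformation-theoretic criterion in the first open degree. The degrees $k=0$ and $k=n$ are trivial, and $k=1$ is the Hodge conjecture for divisors, known by the Lefschetz theorem on $(1,1)$-classes; the first genuinely open case is $k=2$, and that is the one to concentrate on. So the target becomes: given $X$ and $\xi$, exhibit an algebraic cycle $Z$ of codimension $2$ on $X\times X$ whose associated correspondence inverts $\cup\,\xi^{n-2}\colon H^2(X,\Q)\to H^{2n-2}(X,\Q)$.

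The first move is to replace the rigid self-product $X\times X$ by an arbitrary smooth quasi-projective parameter variety $S$. Given $Z\in CH^2(X\times S)$ and $s\in S$, the Leray spectral sequence for $X\times S\to S$ together with the infinitesimal constructions of Griffiths and Voisin (\cite{IVHS3}, \cite{Vo88}) yields a map $\phi_{Z,s}\colon\bigwedge^2\mathcal T_{S,s}\to H^2(X,\mathcal O_X)$, and, by Theorem~1, degree-$2$ Lefschetz for $X$ is \emph{equivalent} to the surjectivity of $\phi_{Z,s}$ for some such triple $(S,Z,s)$. The nontrivial direction of this equivalence reconstructs an algebraic self-correspondence of $X$ out of such a family, the point being that the obstruction to algebraicity of the inverse of $\cup\,\xi^{n-2}$ is governed by the transcendental part of the cohomology, onto which the maps $\phi_{Z,s}$ are made to surject; this Hodge-theoretic step is the heart of Theorem~1. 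It then suffices to produce one explicit family realizing the surjectivity, and the natural source is vector bundles: if $Z$ is built from the Chern classes of a vector bundle $\mathcal E$ on $X\times S$, an Atiyah-class computation (Theorem~2) identifies $\phi_{Z,s}$ with the composite of the Kodaira-Spencer map $\mathcal T_{S,s}\to\mathrm{Ext}^1(\mathcal E_s,\mathcal E_s)$, the Yoneda square $\mathrm{Ext}^1(\mathcal E_s,\mathcal E_s)\times\mathrm{Ext}^1(\mathcal E_s,\mathcal E_s)\to\mathrm{Ext}^2(\mathcal E_s,\mathcal E_s)$, and the trace $\mathrm{Ext}^2(\mathcal E_s,\mathcal E_s)\to H^2(X,\mathcal O_X)$. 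The problem is thereby reduced to a first-order one: find a bundle $\mathcal E$ on $X$ with a positive-dimensional deformation space for which this trace-Yoneda form on $\mathrm{Ext}^1(\mathcal E,\mathcal E)$ is surjective onto $H^2(X,\mathcal O_X)$.

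On the hyperk\"ahler varieties of the last part of the paper this becomes remarkably clean, since $h^{2,0}(X)=1$: surjectivity of the trace-Yoneda form is then simply the non-vanishing of a single alternating $2$-form, and Verbitsky's theory of hyperholomorphic bundles (extending Mukai's nondegeneracy result for K3 surfaces, \cite{Mu84}, \cite{Ver96}) supplies exactly this non-vanishing for \emph{stable hyperholomorphic} bundles. Hence, by Theorem~\ref{nd}, it would be enough to produce one stable hyperholomorphic bundle on $X$ carrying a nontrivial positive-dimensional family of deformations — the tangent bundle $\mathcal T_X$ being the obvious candidate — and, by deforming such a bundle together with the variety, one would similarly get every projective deformation of a Hilbert scheme of points on a K3 surface or of a generalized Kummer variety (Theorem~\ref{stdef}). \textbf{The main obstacle} is precisely this existence statement: showing that such a bundle is not infinitesimally rigid means exhibiting unobstructed first-order deformations, i.e.\ controlling the obstruction map $\mathrm{Ext}^1(\mathcal E,\mathcal E)\to\mathrm{Ext}^2(\mathcal E,\mathcal E)$, which is a genuinely higher-order and global problem that the local criterion above does not address; even for $\mathcal T_X$ the deformation space is not known to be positive-dimensional. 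For a general (non-hyperk\"ahler) $X$ there is the further difficulty that $H^2(X,\mathcal O_X)$ may be large, so one would need simultaneously a big $\mathrm{Ext}^1(\mathcal E,\mathcal E)$ and surjectivity of the trace-Yoneda form, with no analogue of Verbitsky's result available to guarantee the latter.
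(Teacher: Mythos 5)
The statement you are asked about is Grothendieck's conjecture $B(X)$ itself, and the paper contains no proof of it --- it is stated as a \texttt{conj} environment precisely because it remains open. Your proposal is therefore not a proof, and correctly does not claim to be one; what it does is reconstruct, accurately and in the right order, the reduction strategy the paper actually carries out: restriction to the first open degree $k=2$ (where Remark \ref{incond} makes the reduction unconditional), replacement of $X\times X$ by an arbitrary smooth parameter variety $S$ via the map $\phi_{Z,s}$ and Theorem \ref{vari}, the Atiyah-class/Kodaira--Spencer identification of $\phi_{Z,s}$ with the trace--Yoneda form when $Z$ comes from Chern classes of a bundle (Theorem \ref{crit}), and the use of Verbitsky's nondegeneracy results for stable hyperholomorphic bundles on hyperk\"ahler varieties to turn surjectivity into mere non-vanishing (Theorems \ref{nd} and \ref{stdef}). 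You also identify the same irreducible gap the paper acknowledges: nobody has exhibited a stable hyperholomorphic bundle with a nontrivial positive-dimensional deformation space, and the tangent bundle of $S^{[2]}$ may in fact be rigid. In short, your account matches the paper's approach faithfully and locates the open point correctly, but neither you nor the paper proves the conjecture; the honest status of this ``statement'' is that it has been reduced, not resolved.
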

If this conjecture holds for some specific $k$ on $X$, we will say the Lefschetz conjecture holds in degree $k$ for the variety $X$.

Let us recall the following easy lemma, see \cite{Kl91}, Theorem 4.1, which shows in particular that the Lefschetz conjecture does not depend on the choice of a polarization.

\begin{lem}
 Let $X$ and $\xi$ be as above. Then the Lefschetz conjecture holds in degree $k$ for $X$ if and only if there exists an algebraic cycle $Z$ of codimension $k$ in the product $X\times X$ such that the correspondence
$$[Z]_* : H^{2n-k}(X, \Q)\ra H^{k}(X, \Q)$$
is bijective.
\end{lem}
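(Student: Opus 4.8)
The plan is to deduce the "bijective $\Rightarrow$ invertible via a correspondence" direction from the hard Lefschetz theorem, since the converse implication is trivial (if $[Z]_*$ is the inverse of $\cup\,\xi^{n-k}$ it is in particular bijective). So assume we are given a codimension-$k$ cycle $Z$ on $X\times X$ such that $[Z]_*\colon H^{2n-k}(X,\Q)\to H^k(X,\Q)$ is bijective. I want to manufacture from $Z$ a new algebraic cycle whose action is exactly $(\cup\,\xi^{n-k})^{-1}$.

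First I would observe that the composition $\gamma := (\cup\,\xi^{n-k})\circ [Z]_*\colon H^{2n-k}(X,\Q)\to H^{2n-k}(X,\Q)$ is an automorphism of the finite-dimensional $\Q$-vector space $H^{2n-k}(X,\Q)$, and crucially it is itself induced by an algebraic correspondence: $\cup\,\xi^{n-k}$ is the action of the cycle $\Delta\cap(\text{hyperplane})^{n-k}$ (a power of the diagonal intersected with pullbacks of hyperplane classes, which is algebraic), and composition of correspondences corresponds to the usual composition law on $CH^*(X\times X)$, so $\gamma = [W]_*$ for some explicit algebraic cycle $W$ of the appropriate codimension. Next, since $\gamma$ is an automorphism of a finite-dimensional vector space, it satisfies its characteristic (or minimal) polynomial: there is a polynomial $P(T)\in\Q[T]$ with $P(0)\neq 0$ and $P(\gamma)=0$. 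Writing $P(T) = P(0) + T\,Q(T)$, we get $\mathrm{id} = -\tfrac{1}{P(0)}\,Q(\gamma)\circ\gamma$, so $\gamma^{-1} = -\tfrac{1}{P(0)}\,Q(\gamma)$ is a polynomial in $\gamma$ with rational coefficients, hence is again induced by an algebraic cycle $W'$ (polynomials in a correspondence, with $\Q$-coefficients, stay algebraic, using that $CH^*(X\times X)_\Q$ is a ring under composition and contains the diagonal). Then the cycle $Z' := Z\circ W'$ (composition of correspondences) satisfies $[Z']_* = [Z]_*\circ \gamma^{-1} = [Z]_*\circ([Z]_*)^{-1}\circ(\cup\,\xi^{n-k})^{-1} = (\cup\,\xi^{n-k})^{-1}$, which is exactly what the Lefschetz conjecture in degree $k$ asks for. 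One should double-check the codimension bookkeeping: composition of a codimension-$k$ correspondence with correspondences built out of the diagonal and hyperplane classes lands back in codimension $k$ after the appropriate degree shifts, so $Z'$ has codimension $k$ as required.

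The main (and really only) obstacle is the bookkeeping: making sure that "a polynomial in an algebraic correspondence is algebraic" is applied in the correct degrees, i.e. that $\gamma$ is viewed as a self-correspondence of $X$ acting on a fixed cohomology group and that composing it with itself does not force one out of the algebraic cycles on $X\times X$. This is handled by noting that all the relevant operators — $\cup\,\xi^{n-k}$, $[Z]_*$ read in the right direction — can be realized as elements of the $\Q$-algebra of self-correspondences $CH^{*}(X\times X)_\Q$ acting on $H^*(X,\Q)$ via $[\cdot]_*$, a ring homomorphism, after the standard reindexing that makes correspondences compose. Once that framework is set up, the Cayley–Hamilton / "inverse is a polynomial" trick closes the argument with no further geometry. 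I would cite \cite{Kl91}, Theorem 4.1 for the details of the degree bookkeeping rather than reproduce it in full.
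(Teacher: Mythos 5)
Your proposal is correct and follows essentially the same route as the paper: form the automorphism $\gamma=(\cup\,\xi^{n-k})\circ[Z]_*$ of $H^{2n-k}(X,\Q)$, invoke Cayley--Hamilton to write $\gamma^{-1}$ as a polynomial in $\gamma$ (hence algebraic), and compose with $[Z]_*$ to produce the inverse of $\cup\,\xi^{n-k}$. The paper's proof is exactly this, written as $[Z]_*\circ(\cup\,\xi^{n-k}\circ[Z]_*)^{-1}$, with the same appeal to \cite{Kl91} for the bookkeeping.
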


\begin{proof}
 Let $Z$ be as in the lemma. The morphism
$$[Z]_* \circ (\cup \xi^{n-k} \circ [Z]_*)^{-1}: H^{2n-k}(X, \Q)\ra H^{k}(X, \Q)$$
is the inverse of $\cup \xi^{n-k} : H^{k}(X, \Q)\ra H^{2n-k}(X, \Q).$ Now by the Cayley-Hamilton theorem, the automorphism $(\cup \xi^{n-k} \circ [Z]_*)^{-1}$ of $H^{2n-k}(X, \Q)$ is a polynomial in $(\cup \xi^{n-k} \circ [Z]_*)$. As such, it is defined by an algebraic correspondence. By composition, the morphism $[Z]_* \circ (\cup \xi^{n-k} \circ [Z]_*)^{-1}$ is defined by an algebraic correspondence, which concludes the proof.
\end{proof}

For the next results, we will need to work with primitive cohomology classes. Let us recall some notation. Let $S$ be a smooth polarized projective variety of dimension $l$. Let $L$ denote cup-product with the cohomology class of a hyperplane section. For any integer $k$ in $\{0, \ldots, l\}$, let $H^k(S, \Q)_{prim}$ denote the primitive part of $H^k(S, \Q)$, that is, the kernel of
$$L^{l-k+1} : H^k(S, \Q)\ra H^{2l-k+2}(S, \Q).$$
The cohomology groups of $S$ in degrees less than $l$ then admit a Lefschetz decomposition
$$H^k(S, \Q) = \bigoplus_{i\geq 0} L^i H^{k-2i}(S, \Q)_{prim}.$$
The following lemma is well-known, but we include it here for ease of reference as well as to keep track of the degrees for which we have to use the Lefschetz standard conjecture.



\begin{lem}\label{proj}
 Let $k$ be an integer, and let $S$ be a smooth projective scheme of dimension $l\geq k$. Consider the Lefschetz decomposition
$$H^k(S, \Q)=\bigoplus_{i\geq 0} L^i H^{k-2i}(S, \Q)_{prim},$$
where $L$ is the cup-product by the class of a hyperplane section. Assume that the Leschetz conjecture holds for $S$ in degrees up to $k-2$. Then the projections $H^k(S, \Q)\ra L^i H^{k-2i}(S, \Q)_{prim}$ are induced by algebraic correspondences.
\end{lem}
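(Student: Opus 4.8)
The plan is an induction on $k$ whose entire content is the algebraicity of the projection onto primitive cohomology. For $k\leq 1$ one has $H^k(S,\Q)=H^k(S,\Q)_{prim}$, so the only projection is the identity, which is induced by the class of the diagonal; I may therefore assume $k\geq 2$ and that the lemma holds in all strictly smaller degrees.

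The key point is that the projection $\Pi_0\colon H^k(S,\Q)\ra H^k(S,\Q)_{prim}$ along $LH^{k-2}(S,\Q)$ is induced by an algebraic correspondence as soon as the Lefschetz conjecture holds in degree $k-2$. Indeed, $H^k(S,\Q)_{prim}$ is by definition the kernel of $L^{l-k+1}\colon H^k(S,\Q)\ra H^{2l-k+2}(S,\Q)$, while for $u\in H^{k-2}(S,\Q)$ one has $L^{l-k+1}(Lu)=L^{l-(k-2)}u$, and $L^{l-(k-2)}\colon H^{k-2}(S,\Q)\ra H^{2l-(k-2)}(S,\Q)$ is an isomorphism by the hard Lefschetz theorem (this uses $l\geq k-2$). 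Since $H^k(S,\Q)=H^k(S,\Q)_{prim}\oplus LH^{k-2}(S,\Q)$, it follows that
$$\Pi_0=\mathrm{id}-L\circ\bigl(L^{l-(k-2)}\bigr)^{-1}\circ L^{l-k+1}.$$
Now $L$ and $L^{l-k+1}$ are induced by algebraic correspondences (cup-product with a power of a hyperplane class, realized through the diagonal), and $\bigl(L^{l-(k-2)}\bigr)^{-1}\colon H^{2l-(k-2)}(S,\Q)\ra H^{k-2}(S,\Q)$ is induced by an algebraic correspondence \emph{precisely because the Lefschetz conjecture holds in degree $k-2$}. Hence $\Pi_0$ is algebraic.

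For $i\geq 1$ I would set $R:=\bigl(L^{l-(k-2)}\bigr)^{-1}\circ L^{l-k+1}\colon H^k(S,\Q)\ra H^{k-2}(S,\Q)$, which is algebraic by the same reasoning. Writing the Lefschetz decomposition $x=\sum_{j\geq 0}L^jx_j$ with $x_j\in H^{k-2j}(S,\Q)_{prim}$, the $j=0$ term is killed by $L^{l-k+1}$ and $L^{l-k+1+j}x_j=L^{l-(k-2)}(L^{j-1}x_j)$ for $j\geq 1$, so $R(x)=\sum_{j\geq 1}L^{j-1}x_j$; this is exactly the Lefschetz decomposition of $R(x)$ in degree $k-2$ (equivalently, $R$ is the inverse of $L\colon H^{k-2}(S,\Q)\ra LH^{k-2}(S,\Q)$ composed with $\mathrm{id}-\Pi_0$). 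Applying the projection $\Pi'_{i-1}\colon H^{k-2}(S,\Q)\ra L^{i-1}H^{k-2i}(S,\Q)_{prim}$ and then $L$ gives $\Pi_i=L\circ\Pi'_{i-1}\circ R$. By the inductive hypothesis in degree $k-2$ — legitimate since $\dim S=l\geq k-2$ and the Lefschetz conjecture holds for $S$ in degrees up to $k-2$, in particular in degrees up to $k-4$ as the lemma in degree $k-2$ requires — the correspondence $\Pi'_{i-1}$ is algebraic, and composing with the algebraic correspondences realizing $L$ and $R$ finishes the induction.

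The only step that is not purely formal is the algebraicity of the inverse of the hard Lefschetz isomorphism $L^{l-(k-2)}$ on $H^{k-2}(S,\Q)$, which is exactly the Lefschetz conjecture in degree $k-2$; everything else is bookkeeping with the algebraic operator $L$ and with the primitive decomposition. Unwinding the recursion also shows that only the degrees $k-2,k-4,\dots$ of the conjecture are used, which is the quantitative point the lemma is meant to record.
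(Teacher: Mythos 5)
Your argument is correct and is essentially the paper's proof: the key step in both is that the projection onto $L H^{k-2}(S,\Q)$ equals $L\circ\bigl(L^{l-k+2}\bigr)^{-1}\circ L^{l-k+1}$, with the inverse of the hard Lefschetz isomorphism supplied as an algebraic correspondence by the Lefschetz conjecture in degree $k-2$, and the remaining projections handled by induction on $k$. You merely spell out the inductive step (via your operator $R$ and the identity $\Pi_i=L\circ\Pi'_{i-1}\circ R$) that the paper leaves implicit.
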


\begin{proof}
 By induction, it is enough to prove that the projection $H^k(S, \Q)\ra L H^{k-2}(S, \Q)$ is induced by an algebraic correspondence. Let $Z\subset S\times S$ be an algebraic cycle such that

$$[Z]_* : H^{2l-k+2}(S, \Q)\ra H^{k-2}(S, \Q)$$
is the inverse of $L^{l-k+2}$. Then the composition $ L\circ [Z]_*\circ L^{l-k+1}$ is the desired projection since $H^k(S, \Q)_{prim}$ is the kernel of $L^{l-k+1}$ in $H^k(S, \Q)$ .
\end{proof}

\bigskip

The next result is the starting point of our paper. It shows that the Lefschetz standard conjecture in degree $k$ on $X$ is equivalent to the existence of a sufficiently big family of codimension $k$ algebraic cycles in $X$, and allows us to work on the product of $X$ with any variety.

\begin{prop}\label{surj}
 Let $X$ be a smooth projective variety of dimension $n$, and let $k\leq n$ be an integer. Then the Lefschetz conjecture is true in degree $k$ for $X$ if and only if there exists a smooth projective scheme $S$ of dimension $l\geq k$ satisfying the Lefschetz conjecture in degrees up to $k-2$ and a codimension $k$ cycle $Z$ in $X\times S$ such that the morphism
$$[Z]_* : H^{2l-k}(S, \Q)\ra H^k(X, \Q)$$
induced by the correspondence $Z$ is surjective.
\end{prop}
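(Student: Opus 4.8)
The plan is to establish the two implications separately; the real content lies in the converse, so I would dispatch the ``only if'' direction first and quickly. If the Lefschetz conjecture holds in degree $k$ for $X$, I would simply take $S = X$ (of dimension $n \geq k$) and let $Z$ be the codimension $k$ cycle on $X \times X$ provided by the conjecture, so that $[Z]_* \colon H^{2n-k}(X,\Q) \to H^k(X,\Q)$ is bijective, hence in particular surjective. For $k = 2$ --- the case of interest in this paper --- the requirement that $S$ satisfy the Lefschetz conjecture in degrees up to $k-2$ is vacuous; for larger $k$ one still takes $S = X$ and invokes the conjecture for $X$ in the lower degrees.

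For the ``if'' direction, suppose we are given $S$ of dimension $l \geq k$ satisfying the Lefschetz conjecture in degrees up to $k-2$, together with a codimension $k$ cycle $Z$ on $X \times S$ such that $[Z]_* \colon H^{2l-k}(S,\Q) \to H^k(X,\Q)$ is surjective. By the lemma above (which shows the Lefschetz conjecture does not depend on the polarization), it is enough to produce an algebraic cycle $\Gamma$ of codimension $k$ on $X \times X$ for which $[\Gamma]_* \colon H^{2n-k}(X,\Q) \to H^k(X,\Q)$ is bijective. I would build $\Gamma$ out of $Z$, its transpose ${}^t Z$, the power $L_S^{\,l-k}$ of the Lefschetz operator of $S$ (which is an isomorphism $H^k(S,\Q) \to H^{2l-k}(S,\Q)$ by hard Lefschetz, since $l \geq k$), and the algebraic Lefschetz projectors of $S$. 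Concretely: the action $[Z]_*$ is governed by the K\"unneth component $[Z]^{k,k} \in H^k(X) \otimes H^k(S)$, which is a Hodge class, so the transpose cycle induces a morphism of Hodge structures (up to Tate twist) $j := [{}^t Z]_* \colon H^{2n-k}(X,\Q) \to H^k(S,\Q)$, and by Poincar\'e duality $j$ is injective precisely because $[Z]_*$ is surjective. Since $S$ satisfies the Lefschetz conjecture in degrees up to $k-2$, Lemma \ref{proj} provides algebraic self-correspondences $e_i$ of $S$ acting on $H^k(S,\Q)$ as the projectors onto the Lefschetz components $C_i := L_S^{\,i} H^{k-2i}(S,\Q)_{prim}$. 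I then set $f_i := e_i \circ j \colon H^{2n-k}(X,\Q) \to C_i$, so that $\bigcap_i \ker f_i = \ker j = 0$ (because $\sum_i e_i$ is the identity on $H^k(S)$), and let $\Gamma_i$ be the self-correspondence of $X$ obtained by composing, in the appropriate order, ${}^t Z$, then $e_i$, then cup-product with $\xi_S^{\,l-k}$, then $Z$; keeping track of codimensions in the composition of correspondences, $\Gamma_i$ is a codimension $k$ cycle, and $[\Gamma_i]_* = [Z]_* \circ L_S^{\,l-k} \circ e_i \circ j \colon H^{2n-k}(X,\Q) \to H^k(X,\Q)$.

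The heart of the argument is the following computation. Using the adjunction between $Z$ and ${}^t Z$ under Poincar\'e duality, the identity $\langle L_S^{\,l-k} x, y \rangle_S = \int_S \xi_S^{\,l-k} \cup x \cup y$ for $x, y \in H^k(S)$, and the orthogonality of the Lefschetz decomposition of $H^k(S)$ for that same form, one finds, up to a sign $\sigma$ independent of $i$,
\[
\langle [\Gamma_i]_* \alpha, \alpha' \rangle_X \;=\; \sigma\, Q_S(f_i\alpha, f_i\alpha'), \qquad Q_S(x,y) := \int_S \xi_S^{\,l-k} \cup x \cup y ,
\]
so that $[\Gamma_i]_*$ corresponds, via the perfect pairing $H^k(X) \times H^{2n-k}(X) \to \Q$, to the bilinear form on $H^{2n-k}(X,\Q)$ obtained by pulling back $Q_S|_{C_i}$ along $f_i$. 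Now the Hodge--Riemann bilinear relations say that $Q_S|_{C_i}$ is $\varepsilon_i$ times a polarization of the Hodge structure $C_i$, for a sign $\varepsilon_i$ depending only on $i$ and $k$. Therefore, if I set $\Gamma := \sum_i (\sigma\varepsilon_i)\, \Gamma_i$ (an algebraic cycle of codimension $k$, with $\Q$-coefficients), the form attached to $[\Gamma]_*$ is $\sum_i f_i^*(\text{a polarization of } C_i)$, which after the standard Weil-operator twist is a sum of positive semidefinite Hermitian forms with common kernel $\bigcap_i \ker f_i = 0$; hence it is positive definite, in particular nondegenerate, so $[\Gamma]_*$ is bijective and the lemma above concludes.

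The step I expect to be the main obstacle is precisely this passage to a nondegenerate form. The naive composition $[Z]_* \circ L_S^{\,l-k} \circ j$ need not be invertible, because the sub-Hodge-structure $j\bigl(H^{2n-k}(X)\bigr) \subseteq H^k(S)$ can be degenerate for $Q_S$, which is sign-indefinite across the Lefschetz components of $H^k(S)$ (its restriction to $C_i$ changes sign with $i$). The point of splitting $H^k(S)$ into its Lefschetz components --- which can be done \emph{algebraically} exactly because $S$ satisfies the Lefschetz conjecture up to degree $k-2$ --- is that on each component $Q_S$ becomes definite up to sign, after which correcting the signs componentwise makes the contributions add up positively. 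This is the one place where both the hypothesis on $S$ and the positivity in the Hodge--Riemann relations are genuinely used; everything else is bookkeeping with correspondences and K\"unneth components.
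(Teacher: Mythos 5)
Your proposal is correct and is essentially the paper's own argument: the cycle $\Gamma=\sum_i(\sigma\varepsilon_i)\Gamma_i$ you build is exactly the correspondence $[Z]_*\circ L^{l-k}\circ s\circ[Z]^*$ of the paper, where $s=\sum_i(-1)^i e_i$ is the algebraic sign operator furnished by Lemma \ref{proj}, and your componentwise positivity argument is just the unpacked form of the paper's appeal to the Hodge index theorem and the nondegeneracy of a polarization restricted to the sub-Hodge structure $[Z]^*\bigl(H^{2n-k}(X,\Q)\bigr)$.
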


\begin{proof}
Taking $S=X$, the ''only if'' part is obvious. For the other statement, fix a polarization on $S$, and let $L$ be the cup-product with the class of a hyperplane section of $S$. Consider the morphism $s : H^k(S, \Q) \ra H^k(S, \Q)$ which is given by multiplication by $(-1)^{i}$ on $L^i H^{k-2i}(S, \Q)_{prim}$. By the Hodge index theorem, the pairing
$$H^k(S, \C)\otimes H^k(S, \C)\ra \C, \,\alpha\otimes\beta \mapsto \int_S \alpha \cup L^{l-k}(s(\beta))$$
turns $H^k(S, \Q)$ into a polarized Hodge structure. Furthermore, Lemma \ref{proj} shows that $s$ is induced by an algebraic correspondence.

 We have a morphism $[Z]_* : H^{2l-k}(S, \Q)\ra H^k(X, \Q)$ which is surjective. Its dual $[Z]^* : H^{2n-k}(X,\Q)\ra H^k(S, \Q)$ is injective, where $n$ is the dimension of $X$. Let us consider the composition
$$[Z]_*\circ L^{l-k} \circ s \circ [Z]^* : H^{2n-k}(X, \Q)\ra H^k(X, \Q),$$
It is defined by an algebraic correspondence, and it is enough to show that it is a bijection. Since $H^{2n-k}(X, \Q)$ and $H^k(X, \Q)$ have the same dimension, we only have to prove it is injective.

Let $\alpha\in H^{2n-k}(X,\Q)$ lie in the kernel of the composition. For any $\beta\in H^{2n-k}(X, \Q)$, we get
$$([Z]^*\beta)\cup ((L^{l-k} \circ s)([Z]^*\alpha))=0.$$
Since $[Z]^*(H^{2n-k}(X, \Q))$ is a sub-Hodge structure of the polarized Hodge structure $H^k(S, \Q)$, the restriction of the polarization $$<u, v>=\int_S u \cup (L^{l-k}\circ s)(v)$$
on $H^k(S, \Q)$ to this subspace is nondegenerate, which shows that $\alpha$ is zero.
\end{proof}

\begin{rk}
 Using the weak Lefschetz theorem, one can always reduce to the case where $S$ is of dimension $k$.
\end{rk}

\begin{cor}\label{tr}
 Let $X$ be a smooth projective variety of dimension $n$, and let $k\leq n$ be an integer. Assume the Lefschetz conjecture for all varieties in degrees up to $k-2$ and that the generalized Hodge conjecture is true for $H^k(X,\Q)$.

Then the Lefschetz conjecture is true in degree $k$ for $X$ if and only if there exists a smooth projective scheme $S$, of dimension $l$, and a codimension $k$ cycle $Z$ in $CH^k(X\times S)$ such that the morphism
\begin{equation}\label{Lef}
H^{l}(S, \Omega^{l-k}_S)\ra H^k(X, \mathcal{O}_X)
\end{equation}
induced by the morphism of Hodge structures
$$[Z]_* : H^{2l-k}(S, \C)\ra H^k(X, \C)$$
is surjective.
\end{cor}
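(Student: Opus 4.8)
The plan is to deduce this statement from Proposition \ref{surj} by showing that, under the stated hypotheses, the surjectivity of the $(l,k)$-part of $[Z]_*$ on the Hodge filtration is equivalent to the surjectivity of $[Z]_* : H^{2l-k}(S,\Q) \ra H^k(X,\Q)$ as a map of rational Hodge structures. The ``only if'' direction is immediate: if $[Z]_*$ is surjective on rational cohomology, then, being a morphism of Hodge structures, it is surjective on each graded piece of the Hodge filtration; in particular the induced map $H^{l}(S,\Omega^{l-k}_S) = H^{l-k,l}(S) \ra H^{0,k}(X) = H^k(X,\mathcal O_X)$ is surjective (here one uses that $H^{2l-k}(S)$ has Hodge coweights running from $(l-k,l)$ down, and $(0,k)$ is the relevant extreme piece of $H^{2n-k}(X)$ after the compatible identifications, so no higher piece of the source can hit it).

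For the ``if'' direction, suppose the map \eqref{Lef} is surjective. First I would replace $S$ by a smooth projective variety of dimension exactly $k$ using the remark after Proposition \ref{surj} (the weak Lefschetz reduction), so that $H^{2l-k}(S) = H^k(S)$ and the target piece is $H^{0,k}(S)$. The key step is then to promote surjectivity on the bottom Hodge piece $H^k(S,\mathcal O_S) \ra H^k(X,\mathcal O_X)$ to surjectivity of $[Z]_*$ on the full rational Hodge structure. Let $N \subset H^k(X,\Q)$ be the image of $[Z]_*$; it is a sub-Hodge structure. Its orthogonal complement (or rather the quotient $H^k(X,\Q)/N$) is a Hodge structure whose $(0,k)$-part vanishes by hypothesis, hence — since a Hodge structure of weight $k$ with trivial $H^{0,k}$ has coniveau at least $1$ — the quotient $H^k(X,\Q)/N$ has coniveau $\geq 1$. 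By the generalized Hodge conjecture for $H^k(X,\Q)$, this quotient is supported on a divisor; equivalently there is a sub-Hodge structure $N' \supseteq N$ with $N'$ of full rank such that... more precisely, the complement of $N$ inside $H^k(X,\Q)$ is of coniveau $\geq 1$ and thus algebraic in the sense that it is the image of $H^{k-2}(Y)(-1) \ra H^k(X)$ for a smooth divisor $Y \hookrightarrow X$, where the Gysin map is induced by an algebraic correspondence. Composing with the Lefschetz conjecture in degree $k-2$ for $Y$ (available by hypothesis, since we assume it for all varieties in degrees up to $k-2$), one obtains an algebraic self-correspondence of $X$ whose image is exactly that complement; adding its graph to $Z$ produces a cycle $Z'$ with $[Z']_*$ surjective on $H^k(X,\Q)$. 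Then Proposition \ref{surj} applies to $Z'$ (the base $S'$, a disjoint union of $S$ with $Y$, still satisfies the Lefschetz conjecture in degrees up to $k-2$), giving the Lefschetz conjecture in degree $k$ for $X$.

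The main obstacle I expect is the bookkeeping in the ``if'' direction: one must carefully extract from ``$H^k(X,\mathcal O_X)$ is hit'' the statement that the cokernel sub-Hodge structure has coniveau $\geq 1$ — this uses that the Hodge structure on the cokernel is pure of weight $k$ with trivial $(0,k)$ and trivial $(k,0)$ piece (the latter by conjugation), so its Hodge numbers are concentrated in the range $1 \leq p \leq k-1$, which is precisely coniveau $\geq 1$ — and then feed this into the generalized Hodge conjecture to get an honest algebraic cycle, rather than merely a Hodge-theoretic splitting. One must also check compatibility of all the identifications $H^l(S,\Omega^{l-k}_S) \cong H^{l-k,l}(S)$ and the placement of $H^k(X,\mathcal O_X)$ as the extreme Hodge piece of the source of the correspondence $[Z]_*$, which is where the precise degree conventions and the passage $2l-k \rightsquigarrow k$ (after the dimension reduction) matter. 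Modulo this, everything reduces to Proposition \ref{surj}, Lemma \ref{proj}, and the two invoked conjectures.
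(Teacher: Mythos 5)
Your proposal is correct and follows essentially the same route as the paper: the image of $[Z]_*$ is a sub-Hodge structure containing $H^{0,k}$ (hence $H^{k,0}$ by conjugation), so its complement has coniveau at least $1$, the generalized Hodge conjecture supports that complement on an $(n-1)$-dimensional variety $X'$, the Lefschetz conjecture in degree $k-2$ for $X'$ supplies an algebraic correspondence surjecting onto its contribution, and the sum of the two cycles feeds into Proposition \ref{surj}. The only detail you wave at and the paper spells out is the dimension bookkeeping for the disjoint union (the paper replaces $X'$ by $X'\times \mathbb P^1$ and pads with projective spaces so both components of the base have the same dimension $l$).
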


\begin{rk}\label{incond}
 Note that this corollary is unconditional for $k=2$ since the generalized Hodge conjecture is just the Hodge conjecture for divisors, and the Lefschetz standard conjecture is obvious in degree $0$.
\end{rk}

\begin{proof}
Let $X$, $S$ and $Z$ be as in the statement of the corollary. Let $H$ be the image of $H^{2l-k}(S, \Q)$ by $[Z]_*$. By (\ref{Lef}), we have $H^{k, 0}=H^k(X, \mathcal{O}_X)$. Let $H'$ be a sub-Hodge structure of $H^k(X,\Q)$ such that $H^k(X,\Q)=H\oplus H'$. Then $H'^{k,0}=0$. As $H'$ has no part of type $(k,0)$, the generalized Hodge conjecture then predicts that there exists a smooth projective variety $X'$ of dimension $n-1$, together with a proper morphism $f: X'\ra X$ such that $H'$ is contained in $f_* H^{k-2}(X',\Q)$.

If the Lefschetz conjecture is true in degree $k-2$, then it is true for $H^{k-2}(X',\Q)$. As a consequence, we get a cycle $Z'$ of codimension $k-2$ in $X'\times X'$ such that $[Z']_* : H^{2(n-1)-k+2}(X', \Q)\ra H^{k-2}(X', \Q)$ is surjective. Consider the composition
$$H^{2(n-1)+2-k}(X'\times \mathbb P^1, \Q) \twoheadrightarrow H^{2(n-1)-k+2}(X', \Q)\twoheadrightarrow H^{k-2}(X', \Q) \ra H^k(X, \Q),$$
the first map being the pullback by any of the immersions $X'\ra X'\times \mathbb P^1, x'\mapsto (x', x)$, the second one being $[Z']_*$ and the last one $f_*$. This composition is induced by an algebraic correspondence $Z''\hookrightarrow X'\times \mathbb P^1\times X$, and is surjective onto $f_* H^{k-2}(X',\Q)$. It is easy to assume, after taking products with projective spaces, that $S$ and $X'\times \mathbb P^1$ have the same dimension. Now since the subspaces $H$ and
$f_* H^{k-2}(X',\Q)$ generate $H^k(X,\Q)$, the correspondence induced by the cycle $Z+Z''$ in $(S\coprod (X'\times \mathbb P^1))\times X$ satisfies the hypotheses of Proposition \ref{surj}.
\end{proof}

\bigskip

With the notations of the previous corollary, in case, $Z$ is flat over $X$, we have a family of codimension $k$ algebraic cycles in $X$ parametrized by $S$. The next theorem shows that the map (\ref{Lef}), which is the one we have to study in order to prove the Lefschetz conjecture in degree $k$ for $X$, does not depend on the global geometry of $S$, and can be computed locally on $S$. This will allow us to give an explicit description of the map (\ref{Lef}) in terms of the deformation theory of the family $Z$ in the next section.

Let us first recall a general cohomological invariant for families of algebraic cycles. We follow \cite{Vo02}, 19.2.2, see also \cite{IVHS3}, \cite{Vo88} for related discussions. In the previous setting, $Z$, $X$ and $S$ being as before, the algebraic cycle $Z$ has a class
$$[Z]\in H^k(X\times S, \Omega^k_{X\times S}).$$
Using the K\"unneth formula, this last group maps to
$$H^0(S, \Omega^k_S)\otimes H^k(X, \mathcal{O}_X), $$
which means that the cohomology class $[Z]$ gives rise to a morphism of sheaves on $S$
\begin{equation}\label{def}
\phi_Z : \bigwedge^k \mathcal T_S \ra H^k(X, \mathcal O_X)\otimes \mathcal O_S,
\end{equation}
where $\mathcal{T}_S$ is the tangent sheaf of $S$.
If $s$ is a complex point of $S$, let $\phi_{Z,s}$ be the morphism $\bigwedge^k \mathcal T_{S,s} \ra H^k(X, \mathcal O_X)$ coming from $\phi_Z$. 

\bigskip

Note that the definition of $\phi_{Z,s}$ is local on $S$. Indeed, the map $H^k(X\times S, \Omega^k_{X\times S})\ra H^0(S, \Omega^k_S)\otimes H^k(X, \mathcal{O}_X)$ factors through the restriction map 
$$H^k(X\times S, \Omega^k_{X\times S})\ra H^0(S, R^k p_* \Omega^k_{X\times S}),$$ 
where $p$ is the projection from $X\times S$ to $S$, corresponding to the restriction of a cohomology class to the fibers of $p$. Actually, it can be shown that it only depends on the first order deformation $Z_s^{\epsilon}$ of $Z_s$ in $X$, see \cite{Vo02}, Remarque 19.12 under rather weak assumptions. We will recover this result in the next section by giving an explicit formula for $\phi_{Z,s}$. This fact is the one that allows us to reduce the Lefschetz standard conjecture to a variational statement.

The next theorem shows, using the map $\phi_{Z,s}$, that the Lefschetz conjecture can be reduced to the existence of local deformations of algebraic cycles in $X$.

\begin{thm}\label{vari}
 Let $X$ be a smooth projective variety. Assume as in Corollary \ref{tr} that the generalized Hodge conjecture is true for $H^k(X,\Q)$ and the Lefschetz conjecture holds for smooth projective varieties in degree $k-2$.

Then the Lefschetz conjecture is true in degree $k$ for $X$ if and only if there exist a smooth quasi-projective scheme $S$, a codimension $k$ cycle $Z$ in $CH^k(X\times S)$ and a point $s\in S$ such that the morphism
\begin{equation}
\phi_{Z,s} : \bigwedge^k \mathcal T_{S,s} \ra H^k(X, \mathcal O_X)
\end{equation}
is surjective.
\end{thm}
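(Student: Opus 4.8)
The plan is to deduce Theorem \ref{vari} from Corollary \ref{tr}, by converting the pointwise surjectivity of $\phi_{Z,s}$ into the global surjectivity of the map (\ref{Lef}) for a suitable smooth \emph{projective} parameter space. The bridge is the remark that $\phi_Z$ and the map (\ref{Lef}) record the same datum, namely the K\"unneth component of $[Z]\in H^k(X\times S,\Omega^k_{X\times S})$ in $H^k(X,\mathcal O_X)\otimes H^0(S,\Omega^k_S)$. Regard this component as a linear map $H^k(X,\mathcal O_X)^\vee\to H^0(S,\Omega^k_S)$ and let $\omega_1,\dots,\omega_r$ (with $r=h^{k,0}(X)$) be the images of a fixed basis $e_1,\dots,e_r$ of $H^k(X,\mathcal O_X)^\vee$. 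On one hand, by the definition (\ref{def}) the transpose of $\phi_Z$ is the $\mathcal O_S$-linear morphism $H^k(X,\mathcal O_X)^\vee\otimes\mathcal O_S\to\Omega^k_S$ sending $e_j$ to $\omega_j$; on the other hand, Serre duality on $S$ together with the canonical isomorphism $\bigwedge^{l-k}\mathcal T_S\otimes\omega_S\cong\Omega^k_S$ identifies the transpose of (\ref{Lef}) with the linear map $H^k(X,\mathcal O_X)^\vee\to H^0(S,\Omega^k_S)$, $e_j\mapsto\omega_j$ (this is the Hodge-theoretic content of the local description of cohomology classes of families of cycles recalled above, see \cite{Vo02}, 19.2.2). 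Hence $\phi_{Z,s}$ is surjective if and only if $\omega_1(s),\dots,\omega_r(s)$ are linearly independent in the fibre $\Omega^k_S\otimes k(s)$, while (\ref{Lef}) is surjective if and only if $\omega_1,\dots,\omega_r$ are linearly independent in $H^0(S,\Omega^k_S)$.

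For the ``if'' direction, start with a smooth quasi-projective $S$, a cycle $Z$ and a point $s$ with $\phi_{Z,s}$ surjective; we may assume $H^k(X,\mathcal O_X)\ne 0$, the case $H^k(X,\mathcal O_X)=0$ being immediate from Corollary \ref{tr}. Then $\dim S\ge k$. Choose a smooth projective compactification $\bar S$ of $S$ and let $\bar Z$ be the Zariski closure of $Z$ in $X\times\bar S$; since $\phi_{\bar Z,s}$ depends only on the restriction of $[\bar Z]$ near $X\times\{s\}$, as recalled right after (\ref{def}), we have $\phi_{\bar Z,s}=\phi_{Z,s}$ and may assume $S$ projective. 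The vectors $\omega_j(s)$ being linearly independent, \emph{a fortiori} so are the sections $\omega_j$; thus (\ref{Lef}) is surjective, and Corollary \ref{tr} --- whose hypotheses are those of Theorem \ref{vari} --- yields the Lefschetz conjecture in degree $k$ for $X$.

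For the ``only if'' direction, assume the Lefschetz conjecture holds in degree $k$ for $X$. By Corollary \ref{tr} we get a smooth projective $S$ and a codimension $k$ cycle $Z$ on $X\times S$ with (\ref{Lef}) surjective, i.e.\ with $\omega_1,\dots,\omega_r$ linearly independent in $H^0(S,\Omega^k_S)$. The main difficulty is that this does \emph{not} by itself produce a point of $S$ where the values $\omega_j(s)$ are independent: globally independent holomorphic forms may well be pointwise proportional at every point of $S$. I would resolve this by spreading the forms over a self-product. Let $\pi_i\colon S^r\to S$ be the $i$-th projection, set $S'=S^r$, and put $Z'=\sum_{i=1}^r(\mathrm{id}_X\times\pi_i)^*Z\in CH^k(X\times S')$, which makes sense since each $\pi_i$ is flat. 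The forms attached to $Z'$ are then $\omega'_j=\sum_i\pi_i^*\omega_j$, so that at a point $\underline s=(s_1,\dots,s_r)\in S'$ the transpose of $\phi_{Z',\underline s}$ sends $e_j$ to $(\omega_j(s_1),\dots,\omega_j(s_r))$, viewed inside the summand $\bigoplus_{i=1}^r\Omega^k_S\otimes k(s_i)$ of the fibre $\Omega^k_{S'}\otimes k(\underline s)$. Therefore $\phi_{Z',\underline s}$ is surjective if and only if $\bigcap_{i=1}^r V_{s_i}=0$, where $V_s=\{(c_j)\in\C^r:\sum_j c_j\omega_j(s)=0\}$ is the space of linear relations among the $\omega_j(s)$. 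Now $\bigcap_{s\in S}V_s=0$, because a relation valid at every point of $S$ is a relation among the sections $\omega_j$ and hence trivial; so a greedy descent works: having chosen $s_1,\dots,s_i$ with $W_i:=\bigcap_{m\le i}V_{s_m}\ne 0$, pick $s_{i+1}$ with $V_{s_{i+1}}\not\supseteq W_i$, whence $\dim W_{i+1}<\dim W_i$; after at most $r$ steps $W=0$. For such $\underline s$ the map $\phi_{Z',\underline s}$ is surjective, which is what we wanted.

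Thus the heart of the proof is the ``only if'' direction, namely passing from global linear independence of the $\omega_j$ to the existence of a single point where they are pointwise independent; the self-product construction is tailored to that. The only other point calling for care is the identification, in the first paragraph, of $\phi_Z$ with the transpose of (\ref{Lef}): it rests on Poincar\'e and Serre duality on $S$ and on the compatibility of the K\"unneth decompositions of the cohomology groups involved, all of which underlie \cite{Vo02}, 19.2.2.
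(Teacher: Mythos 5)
Your proposal is correct and follows essentially the same route as the paper: identify $\phi_Z$ with the map (\ref{Lef}) via Serre duality on a projective compactification of $S$, observe that pointwise surjectivity of $\phi_{Z,s}$ forces surjectivity of (\ref{Lef}), and for the converse pass to $S^r$ with the sum of the pulled-back cycles at a tuple of points whose fibrewise images jointly generate $H^k(X,\mathcal O_X)$. Your greedy-descent argument for producing the points $s_1,\dots,s_r$ merely spells out a step the paper leaves implicit, so there is no substantive difference.
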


\begin{proof}
 Assume the hypothesis of the theorem holds. Up to taking a smooth projective compactification of $S$ and taking the adherence of $Z$, we can assume $S$ is smooth projective. The morphism of sheaves
$$\phi_Z : \bigwedge^k \mathcal T_S \ra H^k(X, \mathcal O_X)\otimes \mathcal O_S$$
that we constructed earlier corresponds to an element of the group
$$Hom_{\mathcal O_S}(\bigwedge^k \mathcal T_S, H^k(X, \mathcal O_X)\otimes \mathcal O_S)=H^0(\Omega^k_S\otimes H^k(X, \mathcal{O}_X)),$$
which in turn using Serre duality corresponds to a morphism
$$H^l(S, \Omega^{l-k}_S)\ra H^k(X, \mathcal{O}_X),$$
where $l$ is the dimension of $S$.

By the definition of $\phi_Z$, this morphism is actually the morphism (\ref{Lef}) of Corollary \ref{tr}. Indeed, this last morphism was constructed using the K\"unneth formula for $X\times S$, Poincar\'e duality and taking components of the Hodge decomposition, which is the way $\phi_Z$ is defined, since Serre duality is compatible with Poincar\'e duality.

Moreover, by construction, if $\phi_{Z,s}$ is surjective, then $H^l(S, \Omega^{l-k}_S)\ra H^k(X, \mathcal{O}_X)$ is. As for the converse, if $H^l(S, \Omega^{l-k}_S)\ra H^k(X, \mathcal{O}_X)$ is surjective, then we can find points $s_1, \ldots, s_r$ of $s$ such that the images of the $\phi_{Z,s_i}$ generate $H^k(X, \mathcal{O}_X)$. Replacing $S$ by $S^r$, the cycle $Z$ by the disjoint union of the $Z_i=p_i^* Z$, where $p_i : S^r\times X \ra S\times X$ is the projection on the product of the $i$-th factor, and $s$ by $(s_1, \ldots, s_r)$, this concludes the proof by Corollary \ref{tr}.
\end{proof}

The important part of this theorem is that it does not depend on the global geometry of $S$, but only on the local variation of the family $Z$. As such, it makes it possible to use deformation theory and moduli spaces to study the Lefschetz conjecture, especially in degree $2$ where Theorem \ref{vari} is unconditional by Remark \ref{incond}.

\section{A local computation}

Let $X$ be a smooth variety and $S$ a smooth scheme, $X$ being projective and $S$ quasi-projective. Let $Z$ be a cycle of codimension $k$ in the product $X\times S$. As we saw earlier, for any point $s\in S$, the correspondence defined by $Z$ induces a map
$$\phi_{Z,s} : \bigwedge^k \mathcal T_{S,s} \ra H^k(X, \mathcal O_X)$$
The goal of this section is to compute this map in terms of the deformation theory of the family $Z$ of cycles on $X$ parametrized by $S$. We will formulate this result when the class of $Z$ in the Chow group of $X\times S$ is given by the codimension $k$ part $ch_k(\mathcal E)$ of the Chern character of a vector bundle $\mathcal E$ over $X\times S$. It is well-known that we obtain all the rational equivalence classes of algebraic cycles as linear combinations of those.

\bigskip

Let us now recall general facts about the deformation theory of vector bundles and their Atiyah class.
Given a vector bundle $\mathcal{E}$ over $X\times S$, and $p$ being the projection of $X\times S$ to $S$, let $\mathcal{E}xt^1_p(\mathcal{E}, \mathcal{E})$ be the sheafification of the presheaf $U\mapsto \mathrm{Ext}^1_{\mathcal{O}_{X\times U}}(\mathcal{E}_{|X\times U}, \mathcal{E}_{|X\times U})$ on $S$. The deformation of vector bundles determined by $\mathcal E$ is described by the Kodaira-Spencer map. This is a map of sheaves
$$\rho : \mathcal{T}_S\ra \mathcal{E}xt^1_p(\mathcal{E}, \mathcal{E}),$$
where $\mathcal{T}_S$ is the tangent sheaf to $S$. Let $s$ be a complex point of $S$. The Kodaira-Spencer map at $s$ is given by the composition
$$\rho_s : T_{S, s} \ra \mathcal{E}xt^1_p(\mathcal{E}, \mathcal{E})_s\ra \mathrm{Ext}^1(\mathcal{E}_s, \mathcal{E}_s), $$
the last one being the canonical one.

In the next section, we will use results of Verbitsky which allow us to produce unobstructed elements of $\mathrm{Ext}^1(\mathcal{E}_s, \mathcal{E}_s)$ in the hyperholomorphic setting.

\bigskip

Associated to $\mathcal E$ as well are the images in $H^k(X\times S, \Omega^k_{X\times S})$ of the Chern classes of $\mathcal{E}$, which we will denote by $c_k(\mathcal{E})$ with a slight abuse of notation. We also have the  images $ch_k(\mathcal{E})\in H^k(X\times S, \Omega^k_{X\times S})$ of the Chern character.

\bigskip

The link between Chern classes and the Kodaira-Spencer map is given by the Atiyah class. It is well-known that the Chern classes of $\mathcal F$ can be computed from its Atiyah class $A(\mathcal{F})\in \mathrm{Ext}^1(\mathcal{F}, \mathcal F\otimes \Omega^1_Y)$, see \cite{At57}, \cite{HL97}, Chapter 10 :

\begin{prop}\label{chern}
 For $k$ a positive integer, let $\alpha_k\in H^k(Y, \Omega_Y^k)$ be the trace of the element $A(\mathcal F)^k\in \mathrm{Ext}^k(\mathcal{F}, \mathcal F\otimes \Omega^k_Y)$ by the trace map. Then
$$\alpha_k=k!\,ch_k(\mathcal F).$$
\end{prop}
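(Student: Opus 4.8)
This is a classical fact due to Atiyah \cite{At57}; see also \cite{HL97}, Chapter 10. The plan is to prove the identity first for line bundles by a direct computation, and then to bootstrap to an arbitrary vector bundle $\mathcal{F}$ on the smooth variety $Y$ through the splitting principle, using that both sides of the formula are additive in short exact sequences; when $\mathcal{F}$ is only coherent one reduces to the locally free case by resolving $\mathcal{F}$ by vector bundles.

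The two formal inputs I would set up first are functoriality and additivity. For functoriality: if $f : Y' \ra Y$ is a morphism of smooth varieties, the first jet sequence of $\mathcal{F}$ pulls back to the first jet sequence of $f^*\mathcal{F}$ (up to the canonical surjection $f^*\Omega^1_Y \ra \Omega^1_{Y'}$), so $f^*A(\mathcal{F}) = A(f^*\mathcal{F})$, and the trace map commutes with $f^*$; hence $f^*\alpha_k(\mathcal{F}) = \alpha_k(f^*\mathcal{F})$, which together with the standard identity $f^*ch_k(\mathcal{F}) = ch_k(f^*\mathcal{F})$ lets one replace $Y$ by any smooth variety dominating it. For additivity: given $0 \ra \mathcal{F}' \ra \mathcal{F} \ra \mathcal{F}'' \ra 0$, I would choose, on the members of a sufficiently fine open cover, holomorphic connections $\nabla_i$ preserving $\mathcal{F}'$ (the sequence splits on small enough opens); then $\{\nabla_i - \nabla_j\}$ is a \v{C}ech $1$-cocycle representing $A(\mathcal{F})$ which is block upper triangular with respect to $\mathcal{F}' \subset \mathcal{F}$, so its $k$-fold cup-product --- representing $A(\mathcal{F})^k$ in $\mathrm{Ext}^k(\mathcal{F}, \mathcal{F} \otimes \Omega^k_Y) = H^k(Y, \mathcal{E}nd(\mathcal{F}) \otimes \Omega^k_Y)$ --- is again block upper triangular, with diagonal blocks representing $A(\mathcal{F}')^k$ and $A(\mathcal{F}'')^k$. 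Taking the trace kills the strictly upper triangular part, giving $\alpha_k(\mathcal{F}) = \alpha_k(\mathcal{F}') + \alpha_k(\mathcal{F}'')$. Since $ch_k$ is additive as well, it then suffices, via the projective bundle formula for Hodge cohomology --- which makes the pullback $\pi^* : H^q(Y, \Omega^p_Y) \ra H^q(\mathbb{P}(\mathcal{F}), \Omega^p_{\mathbb{P}(\mathcal{F})})$ injective while $\pi^*\mathcal{F}$ sits in a short exact sequence involving a line bundle and a bundle of strictly smaller rank --- to prove the proposition for line bundles, by induction on the rank.

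For an invertible sheaf $L$ one has $\mathcal{E}nd(L) = \mathcal{O}_Y$ with the trace map equal to the identity, so $\alpha_k = A(L)^k$ is simply the $k$-fold cup-product of $A(L) \in H^1(Y, \Omega^1_Y)$. A direct \v{C}ech computation with the extension $0 \ra L \otimes \Omega^1_Y \ra J^1(L) \ra L \ra 0$ identifies $A(L)$ with the $1$-cocycle $\{d\log g_{ij}\}$ associated to a system of transition functions $g_{ij}$ of $L$, which is precisely the \v{C}ech representative of $c_1(L)$ in $H^1(Y, \Omega^1_Y)$; thus $A(L) = c_1(L)$ with the standard normalization. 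Hence $\alpha_k = c_1(L)^k$, while $ch_k(L) = c_1(L)^k / k!$ by definition of the Chern character, and $\alpha_k = k!\, ch_k(L)$ follows.

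The step that demands real care is the additivity argument: one must unwind the Yoneda power $A(\mathcal{F})^k$ as an honest \v{C}ech cup-product of connection-difference cocycles and check the compatibility of the trace map with that cup-product, signs included. This is exactly the algebraic Chern--Weil computation carried out in \cite{At57} and \cite{HL97}, Chapter 10, so one may either quote it or reproduce it; the rest of the argument is formal.
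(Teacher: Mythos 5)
The paper gives no proof of this proposition at all: it is stated as a classical fact with a pointer to \cite{At57} and \cite{HL97}, Chapter 10. Your argument --- functoriality and additivity of $\mathrm{tr}(A(\mathcal F)^k)$ via locally chosen connections preserving a sub-bundle, reduction to line bundles by the splitting principle for Hodge cohomology of $\mathbb P(\mathcal F)$, and the identification $A(L)=c_1(L)$ through the \v{C}ech cocycle $\{d\log g_{ij}\}$ --- is correct and is essentially the standard proof found in those references, so it fills in exactly what the paper delegates to the literature.
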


Now in the relative situation with our previous notation, the vector bundle $\mathcal E$ has an Atiyah class $A(\mathcal E)$ with value in $\mathrm{Ext}^1(\mathcal{E}, \mathcal E\otimes \Omega^1_{X\times S})$. The latter group maps to the group $H^0(S, \mathcal{E}xt^1_p(\mathcal E, \mathcal E\otimes \Omega^1_{X\times S}))$, which contains
$$H^0(S, \mathcal{E}xt^1_p(\mathcal E, \mathcal E)\otimes \Omega^1_{S})=\mathrm{Hom}(\mathcal{T}_S\ra \mathcal{E}xt^1_p(\mathcal{E}, \mathcal{E}))$$
as a direct factor. We thus get a morphism of sheaves
$$\tau : \mathcal{T}_S\ra \mathcal{E}xt^1_p(\mathcal{E}, \mathcal{E}).$$
For the following well-known computation, see \cite{HL97} or \cite{Il71}, Chapter IV.

\begin{prop}\label{KS}
 The map $\tau$ induced by the Atiyah class of $\mathcal E$ is equal to the Kodaira-Spencer map $\rho$.
\end{prop}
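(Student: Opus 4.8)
The plan is to compare $\tau$ and $\rho$ by exhibiting a common \v{C}ech representative on $X \times S$. Working locally on $X$ and on $S$, choose an affine open cover $\{U_i\}$ of $X\times S$ over which $\mathcal E$ is trivial, with transition matrices $g_{ij}\in GL_r(\mathcal O_{U_i\cap U_j})$. Recall that, relative to such a trivialization, the Atiyah class $A(\mathcal E)\in \mathrm{Ext}^1(\mathcal E,\mathcal E\otimes\Omega^1_{X\times S})=H^1(X\times S,\mathcal{E}nd(\mathcal E)\otimes\Omega^1_{X\times S})$ is represented by the $1$-cocycle $\{\,g_{ij}^{-1}\,dg_{ij}\,\}$, where $d$ is the exterior derivative on $X\times S$. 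Since $\Omega^1_{X\times S}$ splits canonically as the direct sum of the pullbacks of $\Omega^1_X$ and of $\Omega^1_S$, we may write $d=d_X+d_S$, and $A(\mathcal E)$ decomposes accordingly into a ``horizontal'' and a ``vertical'' part. The vertical part is represented by $\{\,g_{ij}^{-1}\,d_S g_{ij}\,\}$; under the edge map $\mathrm{Ext}^1(\mathcal E,\mathcal E\otimes\Omega^1_{X\times S})\ra H^0(S,\mathcal{E}xt^1_p(\mathcal E,\mathcal E)\otimes\Omega^1_S)$ this is exactly the element which, via the identification $H^0(S,\mathcal{E}xt^1_p(\mathcal E,\mathcal E)\otimes\Omega^1_S)=\mathrm{Hom}(\mathcal T_S,\mathcal{E}xt^1_p(\mathcal E,\mathcal E))$, corresponds to $\tau$, because $\mathcal{E}xt^1_p(\mathcal E,\mathcal E)=R^1p_*\mathcal{E}nd(\mathcal E)$ is computed by the \v{C}ech complex of $\{U_i\}$ relative to $p$.

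On the other hand, for a local vector field $v$ on an open $V\subseteq S$, the Kodaira-Spencer class $\rho(v)$ describes the first-order deformation of $\mathcal E$ over $X\times V$ in the direction $v$: this deformation has transition matrices $g_{ij}+\epsilon\,(v\cdot g_{ij})$, where $v\cdot g_{ij}$ is the derivative of $g_{ij}$ along the vertical vector field $v$ on $X\times V$. By the usual dictionary between first-order deformations of a locally free sheaf $\mathcal F$ and classes in $H^1(\mathcal{E}nd\,\mathcal F)$, the class $\rho(v)$ is represented by the cocycle $\{\,g_{ij}^{-1}(v\cdot g_{ij})\,\}=\{\,\iota_v\bigl(g_{ij}^{-1}\,d_S g_{ij}\bigr)\,\}$. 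This is precisely the contraction with $v$ of the \v{C}ech representative of $\tau$ obtained above, so $\rho=\tau$ as morphisms of sheaves $\mathcal T_S\ra\mathcal{E}xt^1_p(\mathcal E,\mathcal E)$, and in particular $\rho_s=\tau_s$ at every complex point $s$.

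I expect the bookkeeping with the cocycles to be routine, and the only genuine issue to be pinning down the identifications so that the comparison is literally an equality rather than an equality up to a scalar or a sign: one must check that the splitting of $\mathrm{Ext}^1(\mathcal E,\mathcal E\otimes\Omega^1_{X\times S})$ used to define $\tau$ is compatible with the decomposition $d=d_X+d_S$ of the exterior derivative, that the edge map $\mathrm{Ext}^1(\mathcal E,\mathcal E\otimes\Omega^1_{X\times S})\ra H^0(S,\mathcal{E}xt^1_p(\mathcal E,\mathcal E\otimes\Omega^1_{X\times S}))$ sends a \v{C}ech cocycle to its fiberwise restriction, and that the normalization $\{g_{ij}^{-1}dg_{ij}\}$ of the Atiyah class (rather than $\{dg_{ij}\,g_{ij}^{-1}\}$, or their negatives) matches the sign convention built into the definition of the Kodaira-Spencer map. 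Alternatively, one could avoid explicit cocycles altogether by using the functoriality of the Atiyah class under base change together with Illusie's description of $\rho$ as a morphism into $\mathrm{Ext}^1$ of the cotangent complex; in that language $\rho=\tau$ becomes a naturality statement, but matching the normalizations on the two sides remains the real content, so I would in any case reduce to the local computation above.
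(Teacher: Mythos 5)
Your argument is correct: it is the standard \v{C}ech comparison of the vertical part of the Atiyah cocycle $\{g_{ij}^{-1}dg_{ij}\}$ with the deformation cocycle $\{g_{ij}^{-1}(v\cdot g_{ij})\}$, which is exactly the computation the paper delegates to its references (Huybrechts--Lehn, Ch.~10, and Illusie, Ch.~IV) rather than writing out. The normalization issue you flag is genuinely the only delicate point, and it is harmless here since the paper only ever uses $\tau=\rho$ through the surjectivity or nonvanishing of $\phi_k(\mathcal E)$, which is insensitive to a global sign or scalar.
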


Those two results make it possible to give an explicit description of the map $\phi_{Z}$ of last section in case the image of $Z$ in the Chow group of $X\times S$ is given by the codimension $k$ part $ch_k(\mathcal E)$ of the Chern character of a vector bundle $\mathcal E$ over $X\times S$. First introduce a map of sheaves coming from the Kodaira-Spencer map.

For $k$ a positive integer, let
$$\phi_k(\mathcal E) : \bigwedge^k \mathcal T_S \ra H^k(X, \mathcal O_X)\otimes \mathcal O_S$$
be the composition of the $k$-th alternate product of the Kodaira-Spencer map with the map
$$\bigwedge^k \mathcal{E}xt^1_p(\mathcal{E}, \mathcal{E})\ra \mathcal Ext^k_p(\mathcal E, \mathcal E)\ra H^k(X, \mathcal O_X)\otimes \mathcal O_S,$$
the first arrow being Yoneda product and the second being the trace map.

\begin{lem}
We have
$$\phi_k(\mathcal E)=k!\,\phi_{ch_k(\mathcal E)},$$
where $\phi_{ch_k(\mathcal E)}$ is the map appearing in (\ref{def}).
\end{lem}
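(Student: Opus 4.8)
The statement compares two maps $\bigwedge^k \mathcal{T}_S \to H^k(X,\mathcal{O}_X)\otimes \mathcal{O}_S$: on the one hand $\phi_k(\mathcal{E})$, built out of the Kodaira–Spencer map, Yoneda product and trace; on the other hand $\phi_{ch_k(\mathcal{E})}$, built out of the Künneth component of the cycle class $ch_k(\mathcal{E})\in H^k(X\times S,\Omega^k_{X\times S})$. The natural strategy is to realize both maps as coming from a single object living on $X\times S$, namely the Atiyah class $A(\mathcal{E})\in \mathrm{Ext}^1_{X\times S}(\mathcal{E},\mathcal{E}\otimes \Omega^1_{X\times S})$, and then to track how the two constructions extract information from $A(\mathcal{E})^k$.

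First I would unwind both sides into statements about the relative $\mathcal{E}xt$-sheaves on $S$. By Proposition \ref{KS} the Kodaira–Spencer map $\rho$ is the component $\tau$ of $A(\mathcal{E})$ landing in $\mathrm{Hom}(\mathcal{T}_S, \mathcal{E}xt^1_p(\mathcal{E},\mathcal{E}))$, obtained by projecting $\Omega^1_{X\times S}\twoheadrightarrow \Omega^1_S$ (pulled back to $X\times S$) inside the direct sum decomposition $\Omega^1_{X\times S}=p^*\Omega^1_S\oplus q^*\Omega^1_X$. Taking the $k$-fold Yoneda power of $A(\mathcal{E})$ and then the trace gives the element $\alpha_k = k!\, ch_k(\mathcal{E})$ of Proposition \ref{chern}, now viewed on $X\times S$. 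The key observation is that $\bigwedge^k\Omega^1_{X\times S}$ decomposes by bidegree, and the $(p^*\Omega^k_S)$-isotypic piece of $\frac{1}{k!}\mathrm{tr}(A(\mathcal{E})^k)$, read through $R^kp_*$ and the Künneth/Hodge identification $R^kp_*\Omega^k_{X\times S}\supset \Omega^k_S\otimes H^k(X,\mathcal{O}_X)$, is exactly the element of $\mathrm{Hom}(\bigwedge^k\mathcal{T}_S, H^k(X,\mathcal{O}_X)\otimes\mathcal{O}_S)$ defining $\phi_{ch_k(\mathcal{E})}$. So the proof reduces to checking that forming $A(\mathcal{E})^k$, extracting the purely-$S$-directional Künneth component, and taking the trace, is the same as first projecting $A(\mathcal{E})$ to its $\tau=\rho$ component, then taking the $k$-fold Yoneda product $\bigwedge^k\mathcal{E}xt^1_p(\mathcal{E},\mathcal{E})\to \mathcal{E}xt^k_p(\mathcal{E},\mathcal{E})$, then the relative trace $\mathcal{E}xt^k_p(\mathcal{E},\mathcal{E})\to R^kp_*\Omega^k_X \cong H^k(X,\mathcal{O}_X)\otimes\mathcal{O}_S$.

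The heart of the matter is a compatibility/multiplicativity statement: Yoneda product commutes with taking the wedge of the $\Omega^1$-coefficients and with the projection onto Künneth components, and the trace map is multiplicative and compatible with these projections. Concretely, writing $A = A_S + A_X$ for the two components of the Atiyah class under $\Omega^1_{X\times S}=p^*\Omega^1_S\oplus q^*\Omega^1_X$, one expands $A^k=(A_S+A_X)^k$ by the (graded) binomial-type formula; the term whose $\Omega$-coefficient lies in $\bigwedge^k p^*\Omega^1_S$ is precisely $A_S^k$, and its relative trace is, after the identification above, $\phi_k(\mathcal{E})$ by definition of the Yoneda-product-then-trace composition; meanwhile by Proposition \ref{chern} the full trace of $A^k$ is $k!\,ch_k(\mathcal{E})$, whose relevant Künneth component computes $k!\,\phi_{ch_k(\mathcal{E})}$. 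Matching the two expressions for this single component gives the identity. I would phrase this cleanly by working locally on $S$ with a Čech or connection representative of the Atiyah class (as in \cite{HL97}, Chapter 10, or \cite{Il71}, Chapter IV) so that the Yoneda product is literally cup-product of Čech cocycles and the coefficient-wedge is visible on the nose.

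The main obstacle is purely bookkeeping: making precise the identification $R^kp_*\Omega^k_{X\times S}\to \Omega^k_S\otimes H^k(X,\mathcal{O}_X)$ via Künneth and the Hodge/Dolbeault decomposition, and checking that it is compatible with (i) the trace map in the relative versus absolute settings, (ii) Yoneda product versus cup-product of Atiyah classes, and (iii) the splitting of $\Omega^1_{X\times S}$, all in a way that respects the sign conventions hidden in $\bigwedge^k$ and in the Yoneda product. No single step is deep — each is a standard fact about Atiyah classes, relative $\mathcal{E}xt$, and the Künneth formula — but assembling them so that the factor $k!$ comes out correctly (it is the same $k!$ as in Proposition \ref{chern}, coming from the symmetrization in expanding $A^k$ and the definition of $ch_k$) requires care. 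I would therefore structure the write-up as: (1) recall the decomposition $\Omega^1_{X\times S}=p^*\Omega^1_S\oplus q^*\Omega^1_X$ and the induced decomposition of Atiyah classes; (2) invoke Propositions \ref{chern} and \ref{KS}; (3) expand $\mathrm{tr}(A(\mathcal{E})^k)$ and isolate the $\bigwedge^k p^*\Omega^1_S$-component; (4) identify that component with $k!\,\phi_k(\mathcal{E})$ on one side and with $k!\,\phi_{ch_k(\mathcal{E})}$ on the other, using the compatibility of trace and Künneth with restriction to fibers of $p$; (5) conclude.
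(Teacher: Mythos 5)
Your proposal is correct and follows essentially the same route as the paper: both sides are extracted from $A(\mathcal E)^{\otimes k}$ via Propositions \ref{chern} and \ref{KS}, and the identity comes from the compatibility of Yoneda product and trace with the Leray restriction and the projection $\Omega^1_{X\times S}\to p^*\Omega^1_S$. The paper packages this as one commutative diagram rather than an explicit binomial expansion $A=A_S+A_X$, but the content is identical.
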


\begin{proof}
We have the following commutative diagram
\begin{center}
$$\xymatrix{\mathrm{Ext}^1(\mathcal E, \mathcal{E}\otimes \Omega^1_{X\times S})^{\otimes k} \ar[r]\ar[d] & \mathrm{Ext}^k(\mathcal E, \mathcal{E}\otimes \Omega^k_{X\times S})\ar[r]\ar[d] & H^k(X\times S, \Omega^k_{X\times S})\ar[d]\\
H^0(S, \mathcal{E}xt^1_p(\mathcal E, \mathcal{E}\otimes \Omega^1_{X\times S}))^{\otimes k} \ar[r]\ar[d] & H^0(S, \mathcal{E}xt^k_p(\mathcal E, \mathcal{E}\otimes \Omega^k_{X\times S}))\ar[r]\ar[d] & H^0(S, R^k p_*\Omega^k_{X\times S})\ar[d]\\
H^0(S, \Omega^1_S\otimes \mathcal{E}xt^1_p(\mathcal E, \mathcal{E}))^{\otimes k} \ar[r] & H^0(S, \Omega^k_S\otimes \mathcal{E}xt^k_p(\mathcal E, \mathcal{E}))\ar[r] & H^0(S, \Omega^k_S\otimes H^k(X, \mathcal{O}_X)),}$$
\end{center}
where the horizontal maps on the left are given by Yoneda product, the horizontal maps on the right side are the trace maps, the upper vertical maps come from the Leray exact sequence associated to $p$, and the lower vertical maps come from the projection $\Omega^1_{X\times S} \ra p^* \Omega^1_S$.

By definition, and using Proposition \ref{KS}, the element $A(\mathcal E)^{\otimes k}\in \mathrm{Ext}^1(\mathcal E, \mathcal{E}\otimes \Omega^1_{X\times S})^{\otimes k}$ maps to $$\phi_k(\mathcal E)\in \mathrm{Hom}(\bigwedge^k \mathcal T_S, H^k(X, \mathcal O_X)\otimes \mathcal O_S)=H^0(S, \Omega^k_S\otimes H^k(X, \mathcal{O}_X)),$$
following the left side, then the lower side of the diagram.
On the other hand, Proposition \ref{chern} shows that it also maps to $k!\,\phi_{ch_k(\mathcal E)}$, following the upper side, then the right side of the diagram. This concludes the proof.
\end{proof}

As an immediate consequence, we get the following criterion.

\begin{thm}\label{crit}
Let $X$ be a smooth projective variety, and assume the same hypotheses as in Theorem \ref{vari}. Then the Lefschetz conjecture is true in degree $k$ for $X$ if there exists a smooth quasi-projective scheme $S$, a vector bundle $\mathcal E$ over $X\times S$, and a point $s\in S$ such that the morphism
\begin{equation}\label{critere}
\phi_k(\mathcal E)_s : \bigwedge^k \mathcal T_{S,s} \ra H^k(X, \mathcal O_X)
\end{equation}
induced by $\phi_k(\mathcal E)$ is surjective.
\end{thm}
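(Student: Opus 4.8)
The approach is to reduce the statement to Theorem~\ref{vari} by turning the vector bundle $\mathcal E$ into an honest algebraic cycle $Z\in CH^k(X\times S)$ for which $\phi_{Z,s}$ is proportional to $\phi_k(\mathcal E)_s$. All the cohomological work has in fact already been done in the Lemma preceding the statement, which identifies $\phi_k(\mathcal E)$ with $k!\,\phi_{ch_k(\mathcal E)}$; what is left is essentially formal.

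First I would produce $Z$. By Newton's identities the $k$-th power sum of the Chern roots is an integral polynomial $p_k$ in the elementary symmetric functions, so that $k!\,ch_k(\mathcal E)=p_k\bigl(c_1(\mathcal E),\ldots,c_k(\mathcal E)\bigr)$, with $p_k\in\Z[x_1,\ldots,x_k]$ weighted-homogeneous of degree $k$ for $\deg x_i=i$. Evaluating this polynomial in the Chow ring $CH^\bullet(X\times S)$ yields a well-defined class $Z:=p_k\bigl(c_1(\mathcal E),\ldots,c_k(\mathcal E)\bigr)\in CH^k(X\times S)$ whose image in $H^k(X\times S,\Omega^k_{X\times S})$ equals $k!\,ch_k(\mathcal E)$.

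Next I would use that the map $\phi_Z$ of (\ref{def}) depends only on the K\"unneth--Hodge component of $[Z]$ in $H^0(S,\Omega^k_S)\otimes H^k(X,\mathcal O_X)$, and is additive in that component --- both facts being immediate from the construction (restrict the class to the fibres of the projection $p$, apply the K\"unneth formula, take the relevant Hodge component), and this is exactly what makes the symbol $\phi_{ch_k(\mathcal E)}$ meaningful for the a priori merely rational class $ch_k(\mathcal E)$. Thus $\phi_Z=k!\,\phi_{ch_k(\mathcal E)}$, and combining this with the preceding Lemma we obtain $\phi_Z=\phi_k(\mathcal E)$, hence $\phi_{Z,s}=\phi_k(\mathcal E)_s$.

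Finally, under the standing hypotheses --- the generalized Hodge conjecture for $H^k(X,\Q)$ and the Lefschetz conjecture in degree $k-2$ for all smooth projective varieties, which are precisely those of Theorem~\ref{vari} --- surjectivity of $\phi_k(\mathcal E)_s=\phi_{Z,s}$ allows Theorem~\ref{vari} to apply to the smooth quasi-projective $S$, the cycle $Z$ and the point $s$, and to conclude that the Lefschetz conjecture holds in degree $k$ for $X$. There is no genuine obstacle internal to this argument; the only point deserving a moment's attention is the passage from the possibly non-effective rational class $ch_k(\mathcal E)$ to a bona fide element of $CH^k(X\times S)$, handled by the Newton-polynomial expression above together with the linearity of (\ref{def}). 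The real difficulty, as the paper itself notes, lies not in this deduction but in producing a pair $(S,\mathcal E)$ and a point $s$ for which the map (\ref{critere}) is actually surjective.
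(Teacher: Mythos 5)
Your argument is correct and follows the same route as the paper, which deduces the theorem directly from the preceding Lemma ($\phi_k(\mathcal E)=k!\,\phi_{ch_k(\mathcal E)}$) together with Theorem \ref{vari}. The extra care you take with Newton's identities to produce an integral cycle is harmless but not needed: since the Lefschetz conjecture concerns rational cohomology, a cycle with rational coefficients (and the nonzero factor $k!$) already suffices.
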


\begin{rk}
Since Chern classes of vector bundles generate the Chow groups of smooth varieties, we can get a converse to the preceding statement by stating the theorem for complexes of vector bundles -- or of coherent sheaves. The statement would be entirely similar. As we will not use it in that form, we keep the preceding formulation.
\end{rk}

\paragraph{Example.} Let $A$ be a polarized complex abelian variety of dimension $g$. The tangent bundle of $A$ is canonically isomorphic to $H^1(A, \mathcal{O}_A)\otimes \mathcal{O}_A$. The trivial line bundle $\mathcal{O}_A$ on $A$ admits a family of deformations parametrized by $A$ itself such that the Kodaira-Spencer map $T_{A,O}\ra H^1(A, \mathcal{O}_A)$ is the identity under the above identification. Now the induced deformation of $\mathcal{O}_A\oplus \mathcal{O}_A$ parametrized by $A\times A$ satisfies the criterion of Theorem \ref{crit}, since the map $\bigwedge^2 H^1(A, \mathcal{O}_A) \ra H^2(A, \mathcal{O}_A)$ given by cup-product is surjective and identifies with the map (\ref{critere}). Of course, the Lefschetz conjecture for abelian varieties is well-known, see \cite{Li68}, Theorem 3.

\section{The case of hyperk\"{a}hler varieties}

In this section, we describe how Verbitsky's theory of hyperholomorphic bundles on hyperk\"{a}hler varieties as developed in \cite{Ver96} and \cite{Ver99} makes those a promising source of examples for theorem \ref{crit}. Unfortunately, we were not able to provide examples, as it appears some computations of dimensions of moduli spaces in \cite{Ver99} were incorrect, but we will show how the existence of nontrivial examples of moduli spaces of hyperholomorphic bundles on hyperk\"{a}hler varieties as conjectured in \cite{Ver99} implies the Lefschetz standard conjecture in degree $2$.

\subsection{Hyperholomorphic bundles on hyperk\"ahler varieties}

See \cite{Be83} for general definitions and results. An irreducible hyperk\"ahler variety is a simply connected k\"ahler manifold which admits a closed everywhere non-degenerate two-form which is unique up to a factor. As such, an irreducible hyperk\"ahler variety $X$ has $H^{2,0}(X, \mathcal{O}_X)=\C$, and Theorem \ref{crit} takes the following simpler form in degree $2$.

\begin{thm}
 Let $X$ be an irreducible projective hyperk\"ahler variety. The Lefschetz conjecture is true in degree $2$ for $X$ if there exists a smooth quasi-projective variety $S$, a vector bundle $\mathcal E$ over $X\times S$, and a point $s\in S$ such that the morphism
\begin{equation}
\phi_2(\mathcal E)_s : \bigwedge^2 \mathcal T_{S,s} \ra H^2(X, \mathcal O_X),
\end{equation}
induced by the Kodaira-Spencer map and the trace map, is nonzero.
\end{thm}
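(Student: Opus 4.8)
The plan is to deduce this directly from Theorem \ref{crit} applied with $k=2$, using only the special feature that $H^2(X,\mathcal O_X)$ is one-dimensional for an irreducible hyperkähler variety $X$. Indeed, Theorem \ref{crit} says that the Lefschetz conjecture holds in degree $2$ for $X$ provided there exist a smooth quasi-projective $S$, a vector bundle $\mathcal E$ on $X\times S$ and a point $s\in S$ with $\phi_2(\mathcal E)_s : \bigwedge^2 \mathcal T_{S,s} \ra H^2(X,\mathcal O_X)$ surjective. So the only thing to check is that, when $\dim_{\C} H^2(X,\mathcal O_X)=1$, surjectivity of a linear map to this space is equivalent to nonvanishing of that map. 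This is immediate: a linear map onto a one-dimensional vector space is surjective if and only if it is not identically zero.

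First I would recall that the hypotheses needed to invoke Theorem \ref{crit} in degree $k=2$ — namely the generalized Hodge conjecture for $H^2(X,\Q)$ and the Lefschetz conjecture for smooth projective varieties in degree $0$ — are satisfied unconditionally, as noted in Remark \ref{incond}: the generalized Hodge conjecture in degree $2$ is just the Hodge conjecture for divisors (Lefschetz $(1,1)$), and the Lefschetz standard conjecture in degree $0$ is trivial. Second, I would recall from the opening of this section that an irreducible hyperkähler variety, being simply connected and carrying an everywhere nondegenerate holomorphic two-form unique up to scalar, has $H^{2,0}(X)=\C$, hence by Hodge symmetry $H^2(X,\mathcal O_X)=H^{0,2}(X)$ is also one-dimensional. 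Third, with $\dim H^2(X,\mathcal O_X)=1$, the map $\phi_2(\mathcal E)_s$ of \eqref{critere} is surjective precisely when it is nonzero, so the nonvanishing hypothesis of the present theorem is exactly the surjectivity hypothesis of Theorem \ref{crit}. Applying Theorem \ref{crit} then gives the conclusion.

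There is essentially no obstacle here; the statement is a transparent specialization of Theorem \ref{crit} to the case $h^{2,0}=1$, and the ``main step'' is merely the elementary observation about linear maps into a line. The only point worth stating carefully is the reduction of the hypotheses of Theorem \ref{crit} to something unconditional, which is handled by Remark \ref{incond}; the rest is the dimension count for $H^2(X,\mathcal O_X)$ recalled just above the statement.
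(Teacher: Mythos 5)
Your proposal is correct and is exactly the argument the paper intends: the theorem is stated immediately after the observation that an irreducible hyperk\"ahler variety has $h^{2,0}=1$, so it is the specialization of Theorem \ref{crit} (unconditional in degree $2$ by Remark \ref{incond}) to the case where the target $H^2(X,\mathcal O_X)$ is a line, where surjective and nonzero coincide. No further comment is needed.
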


\bigskip

In the paper \cite{Be83}, Beauville constructs two families of projective irreducible hyperk\"ahler varieties in dimension $2n$ for every integer $n$. Those are the $n$-th punctual Hilbert scheme $S^{[n]}$ of a projective $K3$ surface $S$ and the generalized Kummer variety $K_n$ which is the fiber at the origin of the Albanese map from $A^{[n+1]}$ to $A$, where $A$ is an abelian surface and $A^{[n+1]}$ is the $n+1$-st punctual Hilbert scheme of $A$.

The Bogomolov-Tian-Todorov theorem, see \cite{Bo78}, \cite{Ti87}, \cite{To89}, states that the local moduli space of deformations of an irreducible hyperk\"ahler variety is unobstructed. Small deformations of a hyperk\"ahler variety remain hyperk\"ahler, and in the local moduli space of $S^{[n]}$ and $K_n$, the projective hyperk\"ahler varieties form a dense countable union of hypersurfaces. The varieties $S^{[n]}$ and $K_n$ have Picard number at least $2$, whereas a very general projective irreducible hyperk\"ahler variety has Picard number $1$, hence is not of this form. Except in dimension $6$ and $10$, where O'Grady constructs in \cite{OG99} and \cite{OG03} new examples, all the known hyperk\"ahler varieties are deformations of $S^{[n]}$ or $K_n$.

\bigskip

The Lefschetz standard conjecture is easy to prove in degree $2$ for $S^{[n]}$ (resp. $K_n$), using the tautological correspondence with the $K3$ surface (resp. the abelian surface), see \cite{Ar06}, Corollary 7.5. In terms of Theorem \ref{crit}, one can show that the tautological sheaf on $S^{[n]}$ (resp. $K_n$) associated to the tangent sheaf of $S$ has enough deformations to prove the Lefschetz conjecture in degree $2$. Since the tautological correspondence between $S$ and $S^{[n]}$ gives an isomorphism between $H^{2,0}(S)$ and $H^{2,0}(S^{[n]})$, checking that the criterion is satisfied amounts to the following.

\begin{prop}
 Let $S$ be a projective $K3$ surface. Then there exists a smooth quasi-projective variety $M$ with a distinguished point $O$ parametrizing deformations of $\mathcal{T}_S$ and a vector bundle $\mathcal E$ over $M\times M$ such that $\mathcal{E}_{|\{O\times S\}} \simeq \mathcal{T}_S$, such that the map
$$\phi_2(\mathcal E)_O : \bigwedge^2 \mathcal T_{M,O} \ra H^2(S, \mathcal O_S)$$
induced by the Kodaira-Spencer map and the trace map, is nonzero.
\end{prop}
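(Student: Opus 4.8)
The plan is to take $M$ to be a model of the Gieseker--Maruyama moduli space of stable sheaves on $S$ with the Mukai vector of $\mathcal{T}_S$, near the point $O=[\mathcal{T}_S]$, equipped with a universal family $\mathcal E$. One then unwinds the definition of $\phi_2$: composed with the Kodaira--Spencer isomorphism, the map $\phi_2(\mathcal E)_O$ becomes the Yoneda--trace pairing on $\mathrm{Ext}^1(\mathcal{T}_S,\mathcal{T}_S)$, i.e. Mukai's $2$-form, whose non-degeneracy -- or merely the fact that it is the pairing furnished by Serre duality on the $K3$ surface $S$ -- forces it to be nonzero as soon as $M$ has positive dimension.

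First I would record the standard fact that $\mathcal{T}_S$ is slope-stable for any polarization of $S$, hence simple; combined with Serre duality and $\omega_S\simeq\mathcal O_S$ this gives $\mathrm{Hom}(\mathcal{T}_S,\mathcal{T}_S)=\C$ and $\mathrm{Ext}^2(\mathcal{T}_S,\mathcal{T}_S)\simeq\C$ with the trace map $\mathrm{Ext}^2(\mathcal{T}_S,\mathcal{T}_S)\ra H^2(S,\mathcal O_S)$ an isomorphism, so the trace-free part $\mathrm{Ext}^2_0(\mathcal{T}_S,\mathcal{T}_S)$ vanishes. Therefore the deformations of $\mathcal{T}_S$ are unobstructed and the moduli space $M_0$ of stable sheaves with $v=v(\mathcal{T}_S)$ is smooth at $[\mathcal{T}_S]$ of dimension $\langle v,v\rangle+2$. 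A Riemann--Roch computation gives $\mathrm{ch}(\mathcal{T}_S)=(2,0,-24)$, hence $v=(2,0,-22)$ and $\langle v,v\rangle=88$, so $\dim_{[\mathcal{T}_S]}M_0=90$; equivalently $\dim\mathrm{Ext}^1(\mathcal{T}_S,\mathcal{T}_S)=90>0$.

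Next I would produce $M$ and $\mathcal E$. Since $v=(2,0,-22)$ is not primitive, $M_0$ need not be a fine moduli space, but a local universal family always exists; so one chooses a smooth quasi-projective $M$ with an \'etale morphism $M\ra M_0$ sending a point $O$ to $[\mathcal{T}_S]$, together with a universal sheaf on $M\times S$. As local freeness is an open condition and the fibre over $O$ is $\mathcal{T}_S$, after shrinking $M$ to an open neighbourhood of $O$ this universal sheaf is a vector bundle $\mathcal E$ on $M\times S$ with $\mathcal E_{|O\times S}\simeq\mathcal{T}_S$. By construction of $M_0$ the tangent space at $[\mathcal{T}_S]$ is canonically $\mathrm{Ext}^1(\mathcal{T}_S,\mathcal{T}_S)$ with the Kodaira--Spencer map being this identification, and an \'etale morphism is an isomorphism on tangent spaces, so the Kodaira--Spencer map $\rho_O:\mathcal T_{M,O}\ra\mathrm{Ext}^1(\mathcal{T}_S,\mathcal{T}_S)$ is an isomorphism.

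Finally, by the definition of $\phi_2$ recalled above, $\phi_2(\mathcal E)_O$ is the composition of $\bigwedge^2\rho_O$ with $\bigwedge^2\mathrm{Ext}^1(\mathcal{T}_S,\mathcal{T}_S)\ra\mathrm{Ext}^2(\mathcal{T}_S,\mathcal{T}_S)\ra H^2(S,\mathcal O_S)$ given by Yoneda product and trace. As $\rho_O$ is an isomorphism it suffices to see this second map is nonzero; but it is exactly the pairing $\mathrm{Ext}^1(\mathcal{T}_S,\mathcal{T}_S)\times\mathrm{Ext}^1(\mathcal{T}_S,\mathcal{T}_S)\ra H^2(S,\mathcal O_S)$ coming from Serre duality on $S$ (the trace being the counit), which is perfect -- and, by graded-commutativity of the Yoneda product on $\mathrm{Ext}^1$, alternating -- hence nonzero since $\mathrm{Ext}^1(\mathcal{T}_S,\mathcal{T}_S)\neq0$. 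This is Mukai's non-degeneracy theorem \cite{Mu84}, and it concludes the proof. The only genuine subtlety is the non-representability of the moduli functor for the non-primitive vector $v$; this is handled routinely by passing to an \'etale neighbourhood carrying a local universal family, and the rest is a direct unwinding of the definition of $\phi_2$ together with Serre duality on $S$.
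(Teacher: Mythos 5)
Your proposal is correct and follows essentially the same route as the paper, which simply cites Mukai \cite{Mu84} together with the Riemann--Roch computation showing the moduli space of deformations of $\mathcal{T}_S$ is smooth of dimension $90$; you have merely supplied the details (stability and simplicity of $\mathcal{T}_S$, vanishing of the trace-free $\mathrm{Ext}^2$, the Mukai vector $v=(2,0,-22)$ with $\langle v,v\rangle+2=90$, the local universal family, and the identification of $\phi_2(\mathcal E)_O$ with Mukai's nondegenerate alternating Serre-duality pairing) that the paper leaves to the reference.
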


\begin{proof}
This is proved by Mukai in \cite{Mu84}. A Riemann-Roch computation proves that the moduli space of deformations of of the tangent bundle of a $K3$ surface is smooth of dimension $90$.
\end{proof}

This last proof is of course very specific to Hilbert schemes and does not apply as such to other hyperk\"ahler varieties. We feel nonetheless that it exhibits general facts about hyperk\"ahler varieties which seem to give strong evidence to the Lefschetz conjecture in degree $2$.


\subsection{Consequences of the existence of a hyperk\"ahler structure on the moduli space of stable hyperholomorphic bundles}

In his paper \cite{Mu84}, Mukai studies the moduli spaces of some stable vector bundles on K3 surfaces and endows them with a symplectic structure by showing that the holomorphic two-form induced by (\ref{critere}) on the moduli space is nondegenerate. Of course, this result is not directly useful when dealing with the Lefschetz standard conjecture in degree 2 as it is trivial for surfaces. Nevertheless, Verbitsky shows in \cite{Ver96} that it is possible to extend Mukai's result to the case of higher-dimensional hyperk\"ahler varieties.

\bigskip

Before describing Verbitsky's results, let us recall some general facts from linear algebra around quaternionic actions and symplectic forms. This is all well-known, and described for instance in \cite{Be83}, Example 3, and \cite{Ver96}, section 6. Let $\mathbb H$ denote the quaternions, and let $V$ be a real vector space endowed with an action of $\mathbb H$ and a euclidean metric $(,)$.

Let $I\in \mathbb H$ be a quaternion such that $I^2=-1$. The action of $I$ on $V$ gives a complex structure on $V$. We say that $V$ is quaternionic hermitian if the metric on $V$ is hermitian for all such complex structures $I$. Fix such an $I$, and choose $J$ and $K$ in $\mathbb H$ satisfying the quaternionic relations $I^2=J^2=K^2=-Id, IJ=-JI=K$. We can define on $V$ a real symplectic form $\eta$ such that $\eta(x,y)=(x, Jy)+i(x, Ky)$. This symplectic form does not depend on the choice of $J$ and $K$. Furthermore, $\eta$ is $\C$-bilinear for the complex structure induced by $I$. Now given such $I$ and $\eta$ on $V$, it is straightforward to reconstruct a quaternionic action on $V$ by taking the real and complex parts of $\eta$.

\smallskip

Taking $V$ to be the tangent space to a complex variety, we can globalize the previous computations to get the following. Let $X$ be an irreducible hyperk\"ahler variety with given K\"ahler class $\omega$. Then the manifold $X$ is endowed with a canonical hypercomplex structure, that is, three complex structures $I, J, K$ which satisfy the quaternionic relations $I^2=J^2=K^2=-Id, IJ=-JI=K$. It is indeed possible to check that $J$ and $K$ obtained as before are actually integrable. Conversely, the holomorphic symplectic form on $X$ can be recovered from $I, J, K$ and a K\"ahler form on $X$ with class $\omega$.

If $\mathcal E$ is a complex hermitian vector bundle on $X$ with a hermitian connection $\theta$, we say that $\mathcal E$ is hyperholomorphic if $\theta$ is compatible with the three complex structures $I, J$ and $K$. In case $\mathcal E$ is stable, this is equivalent to the first two Chern classes of $\mathcal E$ being Hodge classes for the Hodge structures induced by $I, J$ and $K$, see \cite{Ver96}, Theorem 2.5. This implies that any stable deformation of a stable hyperholomorphic bundle is hyperholomorphic.  It is a consequence of Yau's theorem, see \cite{Ya78} that the tangent bundle of $X$ is a stable hyperholomorphic bundle.

\bigskip

Let $\mathcal E$ be a stable hyperholomorphic vector bundle on $X$, and let $S=Spl(\mathcal E, X)$ be the reduction of the coarse moduli space of stable deformations of $\mathcal E$ on $X$. For $s$ a complex point of $S$, let $\mathcal E_s$ be the hyperholomorphic bundle corresponding to a complex point $s$ in $S$. The Zariski tangent space to $S$ at $s$ maps to $Ext^1(\mathcal{E}_s, \mathcal{E}_s)$ using the map from $S$ to the coarse moduli space of stable deformations of $\mathcal E$. We can now define a global section $\eta_S$ of $\mathcal Hom(\mathcal T_S\otimes \mathcal T_S, \mathcal O_S)$, where $\mathcal T_S$ is the tangent sheaf to $S$, by the composition
$$\mathcal T_{S,s}\otimes \mathcal T_{S,s} \ra \bigwedge^2 Ext^1(\mathcal{E}_s, \mathcal{E}_s)\ra Ext^2(\mathcal E_s, \mathcal E_s)\ra H^2(X, \mathcal O_X)=\C$$
as in the preceding section. The following is due to Verbitsky, see part (iv) of the proof in section 9 of \cite{Ver96} for the second statement.

\begin{thm}(\cite{Ver96}, Theorem 6.3)\label{mod}
Let $Spl(\mathcal E, X)$ be the reduction of the coarse moduli space of stable deformations of $\mathcal E$ on $X$. Then $S=Spl(\mathcal E, X)$ is endowed with a canonical hyperk\"ahler structure. The holomorphic section of $\mathcal Hom(\mathcal T_S\otimes \mathcal T_S, \mathcal O_S)$ induced by this hyperk\"ahler structure is $\eta_S$.
\end{thm}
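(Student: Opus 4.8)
The plan is to reproduce Verbitsky's argument: one realises $S$ as a hyperk\"ahler quotient of a space of connections and then identifies the resulting holomorphic two-form with $\eta_S$ by a Hodge-theoretic computation. In the two-dimensional case this is due to Mukai, where the symplectic form is essentially Serre duality $\mathrm{Ext}^1(\mathcal{E}_s,\mathcal{E}_s)\times \mathrm{Ext}^1(\mathcal{E}_s,\mathcal{E}_s)\ra \mathrm{Ext}^2(\mathcal{E}_s,\mathcal{E}_s)=\C$. Fix a K\"ahler class $\omega$ on $X$ inducing the hypercomplex structure $I,J,K$, together with a hyperholomorphic hermitian connection $\theta$ on $\mathcal{E}$; since $X$ is irreducible hyperk\"ahler we have $H^2(X,\mathcal{O}_X)=\C$, so the trace map occurring in the definition of $\eta_S$ does land in $\C$.

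First I would set up the twistor picture. For every $L=aI+bJ+cK$ with $a^2+b^2+c^2=1$ the complex structure $L$ on $X$ is integrable, and, by the Hodge-type characterisation of stable hyperholomorphic bundles recalled above, $(\mathcal{E},\theta)$ is again a stable holomorphic bundle on $(X,L)$. Hence the gauge-equivalence class of a hyperholomorphic connection defines a point of $Spl(\mathcal{E},(X,L))$ for every $L$; doing this at every point of $S$ produces complex structures $I_S,J_S,K_S$ on the $C^\infty$ manifold underlying $S$, and the moduli spaces $Spl(\mathcal{E}_s,(X,L))$ assemble into a twistor family over $\mathbb{P}^1$.

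Next I would produce the metric by hyperk\"ahler reduction. The affine space $\mathcal{A}$ of unitary connections on $(\mathcal{E},h)$ carries the flat $L^2$ metric and the quaternionic action of $I,J,K$ on $\mathrm{End}(\mathcal{E})$-valued one-forms, making it an (infinite-dimensional) flat hyperk\"ahler space; the unitary gauge group acts isometrically with a hyperk\"ahler moment map whose zero locus, up to the usual Hermitian--Einstein normalisation, cuts out the hyperholomorphic connections. Stability allows one to replace the unitary quotient at level zero by the quotient by the complexified gauge group, that is, by $S$ itself, and the hyperk\"ahler quotient construction then equips $S$ with a hyperk\"ahler metric $g_S$ whose three complex structures are $I_S,J_S,K_S$. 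The genuine obstacle is concentrated here: one must check that hyperholomorphicity really is the moment-map equation, that stability identifies the two quotients, and that the quotient metric is smooth and nondegenerate along the relevant locus of $S$ -- this is the core of \cite{Ver96}, Theorem 6.3. (A purely algebraic argument shows easily that $\eta_S$ is closed, but not, once $\dim X>2$, that it is nondegenerate, which is exactly why the differential-geometric route is needed.)

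Finally I would identify the two-form. By the linear algebra recalled before the statement, the holomorphic symplectic form attached to $(I_S,J_S,K_S,g_S)$ is $\eta'_S(x,y)=g_S(x,J_S y)+i\,g_S(x,K_S y)$, and it suffices to check $\eta'_S=\eta_S$ on pairs of tangent vectors. A tangent vector at $s$ is represented by a $\bar\partial_{\mathcal{E}_s}$-harmonic $\mathrm{End}(\mathcal{E}_s)$-valued $(0,1)$-form $\alpha$, and the $L^2$-pairings of $\alpha$ against $J_S$ and $K_S$ applied to a second such form $\beta$ unwind -- via wedging, $\mathrm{tr}$, and contraction with the K\"ahler form and the quaternions -- to the same expression as $\mathrm{tr}(\alpha\wedge\beta)$ contracted with the holomorphic symplectic form $\sigma$ of $X$, the latter step being legitimate precisely because $\sigma$ is itself recovered from $I,J,K$ and $\omega$. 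Carrying out this computation, compatibly with the Serre duality used in the previous section, is part (iv) of the proof in \cite{Ver96}, section 9, and completes the identification.
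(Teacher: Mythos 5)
This statement is not proved in the paper at all: it is imported verbatim from \cite{Ver96} (Theorem 6.3 for the hyperk\"ahler structure, part (iv) of the proof in section 9 of \cite{Ver96} for the identification of the two-form with $\eta_S$), so there is no in-paper argument to compare yours against. Read as an account of Verbitsky's proof, your outline is a reasonable roadmap of the standard differential-geometric strategy, but it is not a proof: every step that carries actual content --- that hyperholomorphicity is the zero locus of a hyperk\"ahler moment map on the space of unitary connections, that stability lets one trade the symplectic quotient for the complex gauge quotient, that the quotient metric is well defined, and the harmonic-representative computation identifying the induced holomorphic two-form with the Yoneda-plus-trace pairing $\eta_S$ --- is explicitly deferred back to \cite{Ver96}. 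As a blind proof attempt it therefore establishes nothing beyond what the citation already does.

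There is also a substantive point your quotient construction glosses over. The theorem is stated for the reduction of the coarse moduli space $Spl(\mathcal E, X)$, which need not be smooth, and the paper is explicit that ``hyperk\"ahler structure'' must be taken in Verbitsky's generalized sense for singular varieties (Definition 6.4 of \cite{Ver96}); a naive hyperk\"ahler reduction produces a hyperk\"ahler manifold only on the smooth locus, and handling the singular points is precisely why Verbitsky introduces that definition. Moreover, for $\dim_{\C} X > 2$ the moment-map description of hyperholomorphic connections requires contraction against powers of the K\"ahler form and a Hermitian--Einstein normalisation, which you mention only parenthetically; making this precise, rather than the two-dimensional Mukai picture, is the heart of Verbitsky's argument. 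If you intend this as a genuine proof rather than a pointer to the literature, these are the steps you would have to supply.
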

In this theorem, $S$ does not have to be smooth. We use Verbitsky's definition of a singular hyperk\"ahler variety as in \cite{Ver96}, Definition 6.4.

\bigskip

We can now prove Theorem \ref{nd}.

\begin{proof}[\textbf{Proof of Theorem \ref{nd}}]
Let $X$ be a smooth projective irreducible hyperk\"ahler variety, and let $\mathcal E$ be a stable hyperholomorphic bundle on $X$. Assume that $\mathcal E$ has a nontrivial positive-dimensional family of deformations, and let $s$ be a smooth point of $S=Spl(\mathcal E, X)$ such that $\mathcal T_{S,s}$ is positive dimensional. We can choose a smooth quasi-projective variety $S'$ with a complex point $s'$ and a family $\mathcal E_{S'}$ of stable hyperholomorphic deformations of $\mathcal E$ on $X$ parametrized by $S'$ such that the moduli map $S'\ra S$ maps $s'$ to $s$ and is \'etale at $s'$. Since $\eta_S$ induces a symplectic form on $\mathcal T_{S,s}$, the map
$$\phi_2(\mathcal E_{S'})_s' : \bigwedge^2 \mathcal T_{S',s'} \ra H^2(X, \mathcal O_X)=\C$$
is surjective as it identifies with $\eta_{S,s}$ under the isomorphism $\mathcal T_{S',s'}\xrightarrow{\sim}\mathcal T_{S,s}$. The result now follows from Theorem \ref{crit}.
\end{proof}

\bigskip

In order to prove Theorem \ref{stdef}, we need to recall some well-known results on deformations of hyperk\"ahler varieties. Everything is contained in \cite{Be83}, Section 8 and \cite{Ver96}, Section 1. See also \cite{Hu99}, Section 1 for a similar discussion. Let $X$ be an irreducible hyperk\"ahler variety with given K\"ahler class $\omega$. Let $\eta$ be a holomorphic everywhere non-degenerate $2$-form on $X$. Let $q$ be the Beauville-Bogomolov quadratic form on $H^2(X, \Z)$, and consider the complex projective plane $P$ in $\mathbb{P}(H^2(X, \C))$ generated by $\eta, \overline{\eta}$ and $\omega$. There exists a quadric $Q$ of deformations of $X$ given the elements $\alpha\in P$ such that $q(\alpha)=0$ and $q(\alpha+\overline{\alpha})>0$.

Recalling that the tangent bundle of $X$ comes with an action of the groups of quaternions of norm $1$ given by the three complex structures $I, J, K$, which satisfy the quaternionic relations $I^2=J^2=K^2=-Id, IJ=-JI=K$, this quadric $Q$ of deformations of $X$ corresponds to the complex structures on $X$ of the form $aI+bJ+cK$ with $a,b,c$ being three real numbers such that $a^2+b^2+c^2=1$ -- those complex structures are always integrable. The quadric $Q$ is called a twistor line.

In this setting, let $d$ be the cohomology class of a divisor in $H^2(X, \C)$, and let $\alpha$ be in $Q$. This corresponds to a deformation $X_{\alpha}$ of $X$. The cohomology class $d$ corresponds to a rational cohomology class in $H^2(X_{\alpha}, \C)$, and it is the cohomology class of a divisor if and only if it is of type $(1,1)$, that is, if and only if $q(\alpha, d)=0$, where by $q$ we also denote the bilinear form induced by $q$. Indeed, $d$ is a real cohomology class, so if $q(\alpha, d)=0$, then $q(\overline{\alpha}, d)=0$ and $d$ is of type $(1,1)$. It follows from this computation that $d$ remains the class of a divisor for all the deformations of $X$ parametrized by $Q$ if and only if $q(\eta,d)=q(\omega, d)=0$.

\bigskip

We will work with the varieties $S^{[n]}$, the case of generalized Kummer varieties being completely similar. Let us start with a K3 surface $S$, projective or not, and let us consider the irreducible hyperk\"ahler variety $X=S^{[n]}$ given by the Douady space of $n$ points in $S$ -- this is K\"ahler by \cite{Va89}. In the moduli space $M$ of deformations of $X$, the varieties of the type $S'^{[n]}$ form a countable union of smooth hypersurfaces $H_i$. On the other hand, the hyperk\"ahler variety admits deformations parametrized by a twistor line, and those cannot be included in any of the $H_i$. Indeed, if that were the case, the class $e$ of the exceptional divisor of $X=S^{[n]}$ would remain algebraic in all the deformations parametrized by the twistor line. But this is impossible, as $e$ is a class of an effective divisor, which implies that $q(\omega, e)>0$, $\omega$ being a K\"ahler class, see \cite{Hu99}, 1.11 and 1.17.  

This computation actually shows that the twistor lines are transverse to the hypersurfaces $H_i$. Now the preceding defintion of the twistor line parametrizing deformations of an irreducible hyperk\"ahler $X$ shows that it moves continuously with deformations of $X$. Counting dimensions, this implies that the union of the twistor lines parametrizing deformations of Douady spaces of $n$ points on K3 surfaces cover a neighborhood of the $H_i$ in $M$. We thus get the following.

\begin{lem}\label{twist}
 Let $n$ be a positive integer, and let $X$ be a small projective deformation of the Douady space of $n$ points on a K3 surface. Then there exists a K3 surface $S$ and a twistor line $Q$ parametrizing deformations of $S^{[n]}$ such that $X$ is a deformation of $S^{[n]}$ along $Q$.
\end{lem}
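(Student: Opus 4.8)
The plan is to prove Lemma \ref{twist} by a transversality-and-dimension-count argument in the moduli space $M$ of deformations of $X=S^{[n]}$, using the facts about twistor lines assembled in the preceding paragraphs. First I would fix the local picture: the local moduli space $M$ of deformations of an irreducible hyperk\"ahler variety is smooth of dimension $b_2(X)-2$ by the Bogomolov-Tian-Todorov theorem, and by the local Torelli theorem it embeds via the period map as an open subset of the period domain $\{[\alpha]\in \mathbb{P}(H^2(X,\C)) : q(\alpha)=0,\ q(\alpha+\overline\alpha)>0\}$. Under this identification, the locus of varieties of the form $S'^{[n]}$ is the countable union of smooth hypersurfaces $H_i$ cut out by the conditions $q(\alpha,d_i)=0$, where the $d_i$ range over the relevant integral classes (built from the exceptional class $e$ and the image of $H^2(S,\Z)$); in particular each $H_i$ is a hyperplane section of the quadric. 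A twistor line $Q$ through a point of some $H_i$ is the conic obtained by intersecting the quadric with the plane $P$ spanned by $\eta,\overline\eta,\omega$ for a choice of K\"ahler class $\omega$ on the corresponding Douady space.

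Next I would establish transversality, which is already sketched in the text: if a twistor line $Q$ associated to a Douady space $S'^{[n]}$ were contained in some $H_i$, then every integral class defining $H_i$ — in particular the exceptional class $e$ of $S'^{[n]}$ — would stay of type $(1,1)$ along all of $Q$, forcing $q(\omega,e)=0$; but $e$ is the class of an effective divisor and $\omega$ a K\"ahler class, so $q(\omega,e)>0$, a contradiction (here I invoke \cite{Hu99}, 1.11 and 1.17). Hence $Q\not\subset H_i$ for every $i$, and since $Q$ is a conic and $H_i$ a hyperplane section, $Q$ meets each $H_i$ transversally along the quadric. The key remaining point is then a dimension count: the twistor lines sweeping out deformations of Douady spaces form a family of dimension $(\dim M - 1) + 1 = \dim M$ over the union $\bigcup_i H_i$ — more precisely, as $X'=S'^{[n]}$ varies in an $H_i$ (dimension $\dim M -1$) and as the K\"ahler class $\omega$ varies in the positive cone of $X'$ (a parameter transverse to $H_i$), the associated twistor line $Q$ varies, and because $Q$ is transverse to $H_i$ the total space of this family of conics is open in $M$. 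Thus the union of these twistor lines contains a neighborhood of each point of $H_i$ in $M$.

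Finally, I would conclude: given a small projective deformation $X$ of $S^{[n]}$, the corresponding period point lies in $M$ arbitrarily close to the hypersurface $H_i$ containing the period of $S^{[n]}$ (indeed $X$ itself is close to $S^{[n]}$ by hypothesis), hence lies on one of the twistor lines $Q$ produced above, which by construction parametrizes deformations of some $S^{[n]}$ for a K3 surface $S$ (possibly non-projective, hence the recourse to Douady spaces and the K\"ahlerness result \cite{Va89}). Therefore $X$ is a deformation of $S^{[n]}$ along $Q$, which is exactly the statement of the lemma. The generalized Kummer case is identical with $e$ replaced by the analogous exceptional class and the period domain description adjusted accordingly.

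I expect the main obstacle to be making the dimension count fully rigorous, i.e. verifying that the family of twistor lines through $\bigcup_i H_i$ genuinely fills a neighborhood rather than, say, a lower-dimensional locus; this is where one really needs both that the twistor line moves continuously (indeed analytically) with the pair $(X',\omega)$ — which follows from the explicit description of $Q$ as the quadric cut out in the plane $P(\eta,\overline\eta,\omega)$ — and that the extra K\"ahler parameter $\omega$ moves $Q$ in a direction transverse to $H_i$, which is precisely the content of the transversality statement established in the second step. Once transversality is in hand, the openness of the total space of the conic family is a clean application of the implicit function theorem, and the rest is bookkeeping with the period map.
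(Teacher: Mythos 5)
Your argument is correct and follows essentially the same route as the paper's: the paper likewise establishes that twistor lines cannot lie in the hypersurfaces $H_i$ via the positivity $q(\omega,e)>0$ of the exceptional class against a K\"ahler class (citing the same points of \cite{Hu99}), and then concludes by the same continuity-plus-dimension-count argument that the twistor lines through $\bigcup_i H_i$ sweep out a neighborhood in the moduli space. Your write-up merely makes explicit the period-domain description and the transversality/implicit-function-theorem step that the paper leaves implicit.
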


\bigskip

The next result of Verbitsky is the main remaining ingredient we need to prove Theorem \ref{stdef}. Recall first that if $\mathcal E$ is a hyperholomorphic vector bundle on an irreducible hyperk\"ahler variety $X$, then by definition the bundle $\mathcal E$ deforms as $X$ deforms along the twistor line.

\begin{thm}\label{defmod}(\cite{Ver96}, Corollary 10.1)
Let $X$ be an irreducible hyperk\"ahler variety, and let $\mathcal E$ be a stable hyperholomorphic vector bundle on $X$, and let $Spl(\mathcal E, X)$ be the reduction of the coarse moduli space of stable deformations of $\mathcal E$ on $X$.

Then the canonical hyperk\"ahler structure on $Spl(\mathcal E, X)$ is such that if $Q$ is the twistor line parametrizing deformations of $X$, $Q$ is a twistor line parametrizing deformations of $Spl(\mathcal E, X)$ such that if $\alpha\in Q$, then $Spl(\mathcal E, X)_{\alpha}=Spl(\mathcal E_{\alpha}, X_{\alpha})$.
\end{thm}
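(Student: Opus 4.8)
The plan is to deduce both assertions of the theorem — that $Q$ is a twistor line for $Spl(\mathcal E,X)$, and that its fibre over $\alpha\in Q$ is $Spl(\mathcal E_\alpha,X_\alpha)$ — by comparing two \emph{a priori} different complex structures on the real manifold underlying $S=Spl(\mathcal E,X)$: the one coming from the deformation theory of holomorphic bundles on the complex manifold $X_\alpha$, and the one $I_{S,\alpha}=aI_S+bJ_S+cK_S$ (for $\alpha=aI+bJ+cK$) cut out by the quaternionic operators $I_S,J_S,K_S$ underlying the canonical hyperk\"ahler structure of Theorem \ref{mod}. Recall from the construction behind that theorem that $I_S,J_S,K_S$ act on $T_sS=\mathrm{Ext}^1(\mathcal E_s,\mathcal E_s)$ through the Hodge-theoretic action of $I,J,K$ on $\mathcal{E}nd(\mathcal E_s)$-valued harmonic $1$-forms for the hyperk\"ahler metric, the metric on $S$ being the $L^2$ one; together with the linear algebra recalled above, this is exactly the hyperk\"ahler structure on $S$.

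First I would produce, for each $\alpha\in Q$, a set-theoretic identification $Spl(\mathcal E_\alpha,X_\alpha)\cong S$. A hyperholomorphic connection on $\mathcal E$ over $X$ is by definition compatible with $I$, $J$ and $K$, hence with $\alpha$, and so turns $\mathcal E$ into a stable holomorphic bundle $\mathcal E_\alpha$ on the compact hyperk\"ahler manifold $X_\alpha$ — this is the family over $Q$ recalled before the theorem. Conversely, a point of $Spl(\mathcal E_\alpha,X_\alpha)$ is a stable bundle $\mathcal F$ on $X_\alpha$; being a stable deformation of the stable hyperholomorphic bundle $\mathcal E_\alpha$ on $X_\alpha$ (which carries the same metric), it is hyperholomorphic by the fact recalled earlier in this section, so its Hermitian--Einstein connection is compatible with $I,J,K$ and defines a holomorphic bundle on $X=X_I$, necessarily a stable deformation of $\mathcal E$ and thus a point of $S$. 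These constructions are mutually inverse; working throughout with the reductions of the coarse moduli spaces, as in the statement, I may ignore nilpotents.

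Next I would check that this bijection is an isomorphism of complex spaces once $S$ is given the complex structure $I_{S,\alpha}$. At a point $[\mathcal F]$, the tangent space of $Spl(\mathcal E_\alpha,X_\alpha)$ is $\mathrm{Ext}^1_{X_\alpha}(\mathcal F,\mathcal F)$, which Dolbeault--Hodge theory for the hyperk\"ahler metric identifies with the space $\mathcal H^1$ of $\mathcal{E}nd(\mathcal F)$-valued harmonic $1$-forms equipped with the complex structure induced by $\alpha$ acting on forms; by the description of $I_S,J_S,K_S$ recalled above, this is exactly the complex structure that $I_{S,\alpha}$ puts on $T_sS=\mathcal H^1$. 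Since the Hermitian--Einstein connection of $\mathcal F$ does not depend on the choice of complex structure in the twistor family, a harmonic first-order deformation on the $X_\alpha$ side corresponds to the same harmonic form on the $X$ side, so the differential of the bijection is the identity of $\mathcal H^1$, and the bijection is biholomorphic: $Spl(\mathcal E_\alpha,X_\alpha)\xrightarrow{\sim}(S,I_{S,\alpha})$. Assembling over $\alpha$, I would let $\mathcal X\to Q$ be the twistor space of $X$ and $\mathcal S\to Q$ the relative coarse moduli space of fibrewise-stable hyperholomorphic bundles deforming $\mathcal E$; the previous step identifies the fibre of $\mathcal S$ over $\alpha$ with $(S,I_{S,\alpha})$ and the fibrewise complex structure of $\mathcal S$ with $I_{S,\alpha}$, which, together with the tautological structure in the $Q$-direction, exhibits $\mathcal S$ as the twistor space of the hyperk\"ahler structure on $S$ given by Theorem \ref{mod}. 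Hence $Q$ is a twistor line for $S=Spl(\mathcal E,X)$ and $S_\alpha=Spl(\mathcal E_\alpha,X_\alpha)$ for every $\alpha\in Q$, which is the assertion.

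The main obstacle is the matching used in the third step: that the complex structure which the deformation theory of bundles on $X_\alpha$ places on the moduli space agrees — uniformly in $\alpha$, and at possibly obstructed or singular points of $Spl(\mathcal E,X)$ — with the one coming from the quaternionic action on $\mathcal{E}nd$-valued harmonic $1$-forms underlying the structure of Theorem \ref{mod}. Making this rigorous requires Verbitsky's analytic input in \cite{Ver96}: that the $L^2$-metric on $Spl(\mathcal E,X)$ is the relevant Weil--Petersson K\"ahler metric, that harmonic theory for a hyperholomorphic bundle is constant along the twistor family, and his formalism of singular hyperk\"ahler varieties — which is also why the statement is phrased in terms of the reductions of the coarse moduli spaces. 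Granting this, integrability of the assembled complex structure on $\mathcal S$ is automatic, since Theorem \ref{mod} already furnishes an integrable hyperk\"ahler structure on each fibre.
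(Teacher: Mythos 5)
This statement is not proved in the paper at all: it is imported verbatim from Verbitsky (\cite{Ver96}, Corollary 10.1) and used as a black box in the proof of Theorem \ref{stdef}, so there is no in-paper argument to compare yours against. Judged on its own terms, your outline is a faithful reconstruction of the strategy behind Verbitsky's result: the set-theoretic identification of $Spl(\mathcal E_\alpha, X_\alpha)$ with $Spl(\mathcal E, X)$ via the fact that a hyperholomorphic connection is simultaneously holomorphic for every $\alpha = aI+bJ+cK$, combined with the observation (recalled in the paper) that stable deformations of a stable hyperholomorphic bundle are again hyperholomorphic because the condition only involves the topological classes $c_1$ and $c_2$ being Hodge for $I$, $J$, $K$; then the identification of tangent spaces through $\mathcal{E}nd$-valued harmonic $1$-forms with the quaternionic action.

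That said, as a self-contained proof your write-up has a gap that you yourself name: the entire content of the theorem is concentrated in the assertion that the complex structure which bundle-deformation theory over $X_\alpha$ induces on the moduli space coincides with $aI_S+bJ_S+cK_S$ for the quaternionic triple of Theorem \ref{mod}, and you establish this only by appealing to ``Verbitsky's analytic input'' --- the invariance of the harmonic theory of a hyperholomorphic bundle along the twistor family, the identification of the $L^2$-metric with the relevant K\"ahler metric, and the formalism of singular hyperk\"ahler varieties at obstructed points. Deferring these is consistent with how the paper itself treats the statement (as a citation), but be aware that what remains after your reductions is not a routine verification; it is the analytic core of \cite{Ver96}, Sections 6--10. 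Two smaller points deserve a sentence each in a careful write-up: (i) that the inverse map lands in the correct connected component, i.e.\ that the bundle on $X$ obtained from a deformation of $\mathcal E_\alpha$ is genuinely a \emph{deformation of} $\mathcal E$, which needs continuity of the Hermitian--Einstein connection in the bundle; and (ii) integrability of the assembled complex structure on the total space $\mathcal S \to Q$ in the $Q$-direction, which is not automatic from fibrewise integrability alone.
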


This implies that the deformations of a hyperholomorphic bundle on $X$ actually deform as the complex structure of $X$ moves along a twistor line. We can now prove our last result.

\begin{proof}[\textbf{Proof of Theorem \ref{stdef}}]
Let $X$ be an irreducible projective hyperk\"ahler variety that is a deformation of the Douady space of $n$ points on some K3 surface. By a standard Hilbert scheme argument, in order to prove the Lefschetz conjecture for $X$, it is enough to prove it for an open set of the moduli space of projective deformations of $X$. By Lemma \ref{twist}, we can thus assume that $X$ is a deformation of some $S^{[n]}$ along a twistor line $Q$, where $S$ is a K3 surface. Let $\mathcal E$ on $S^{[n]}$ be a sheaf as in the statement of the theorem. By Theorems \ref{defmod} and \ref{nd}, we get a bundle $\mathcal E'$ which still satisfies the hypothesis of Theorem \ref{crit}. This concludes the proof.
\end{proof}

It is particularly tempting to use this theorem with the tangent bundle of $S^{[n]}$, which is stable by Yau's theorem and hyperholomorphic since its first two Chern classes are Hodge classes for all the complex structures induced by the hyperk\"ahler structure of $S^{[n]}$. Unfortunately, while Verbitsky announces in \cite{Ver99}, after the proof of Corollary 10.24, that those have some unobstructed deformations for $n=2$ and $n=3$,it seems that if $n=2$, the tangent bundle might be actually rigid. However, we get the following result by applying the last theorem to the tangent bundle.

\begin{cor}
Let $n$ be a positive integer. Assume that for every K3 surface $S$, the tangent bundle $\mathcal T_{S^{[n]}}$ of $ S^{[n]}$ has a nontrivial positive-dimensional family of deformations. Then the Lefschetz conjecture is true in degree $2$ for any projective deformation of the Douady space of $n$ points on a K3 surface.
\end{cor}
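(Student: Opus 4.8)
The plan is to reduce the corollary to Theorem \ref{stdef} by taking the stable hyperholomorphic sheaf $\mathcal E$ appearing there to be the tangent bundle $\mathcal T_{S^{[n]}}$ itself. Once it is verified that $\mathcal T_{S^{[n]}}$ is indeed stable and hyperholomorphic on $S^{[n]}$ for every K3 surface $S$, the statement is immediate.

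First I would recall that, by Beauville \cite{Be83}, $S^{[n]}$ is an irreducible hyperkähler variety for every K3 surface $S$, projective or not; in particular it has trivial canonical bundle, so $c_1(\mathcal T_{S^{[n]}})=0$. Fixing a Kähler class, Yau's theorem \cite{Ya78} provides a Ricci-flat (hyperkähler) metric whose holonomy lies in $\mathrm{Sp}(n)$, and the Chern connection of the induced hermitian metric on $\mathcal T_{S^{[n]}}$ is the Levi--Civita connection. Since the hyperkähler metric is Kähler with respect to each of the complex structures $aI+bJ+cK$, this single hermitian connection is compatible with $I$, $J$ and $K$; by the very definition recalled in the excerpt, $\mathcal T_{S^{[n]}}$ is therefore hyperholomorphic. (Equivalently one may invoke Verbitsky's criterion, \cite{Ver96}, Theorem 2.5: $\mathcal T_{S^{[n]}}$ is stable with $c_1=0$, while $c_2$ is represented by the $(2,2)$ Chern--Weil form of the hyperkähler connection and hence is of Hodge type for every complex structure $aI+bJ+cK$.) Stability of $\mathcal T_{S^{[n]}}$ with respect to the chosen Kähler class is, as the excerpt already recalls, another consequence of Yau's theorem together with the irreducibility of the $\mathrm{Sp}(n)$-action on the tangent space.

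With $\mathcal E=\mathcal T_{S^{[n]}}$ now known to be a stable hyperholomorphic sheaf, the hypothesis of the corollary — that $\mathcal T_{S^{[n]}}$ has a nontrivial positive-dimensional family of deformations for every K3 surface $S$ — is exactly the hypothesis of Theorem \ref{stdef} specialised to this choice of $\mathcal E$. I would then apply Theorem \ref{stdef} verbatim to conclude that the Lefschetz conjecture holds in degree $2$ for every projective deformation of the Douady space $S^{[n]}$.

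I do not expect any serious obstacle in this argument: it is a formal consequence of Theorem \ref{stdef}, the only substantive point being the well-known verification that the tangent bundle of an irreducible hyperkähler variety is stable and hyperholomorphic. The genuine difficulty is, of course, entirely contained in the hypothesis itself — exhibiting nontrivial deformations of $\mathcal T_{S^{[n]}}$ amounts to controlling the obstruction map on $\mathrm{Ext}^1(\mathcal T_{S^{[n]}},\mathcal T_{S^{[n]}})$ — and, as noted in the text, this may already fail for $n=2$, so the corollary is genuinely conditional.
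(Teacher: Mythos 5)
Your proposal is correct and follows essentially the same route as the paper, which likewise obtains the corollary by applying Theorem \ref{stdef} to $\mathcal E=\mathcal T_{S^{[n]}}$ after noting that the tangent bundle is stable by Yau's theorem and hyperholomorphic because its first two Chern classes are Hodge classes for all the complex structures induced by the hyperk\"ahler structure. Your extra detail on the Levi--Civita connection and Verbitsky's criterion is a correct expansion of the one-line justification the paper gives.
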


\begin{rk}
 The conditions of the corollary might be actually not so difficult to check. Indeed, Verbitsky's Theorem 6.2 of \cite{Ver96} which computes the obstruction to extending first-order deformations implies easily that the obstruction to deform $\mathcal T_{S^{[n]}}$ actually lies in $H^2(S^{[n]}, \Omega^2_{S^{[n]}})$, where we see this group as a subgroup of
$$\mathrm{Ext}^2(\mathcal T_{S^{[n]}}, \mathcal T_{S^{[n]}})=H^2(S^{[n]}, \Omega^{\otimes 2}_{S^{[n]}})$$
under the isomorphism $\mathcal T_{S^{[n]}}\simeq \Omega^1_{S^{[n]}}$.

Now the dimension of $H^2(S^{[n]}, \Omega^{2}_{S^{[n]}})$ does not depend on $n$ for large $n$, see for instance \cite{GS93}, Theorem 2. As a consequence, the hypothesis of the Corollary would be satisfied for large $n$ as soon as the dimension of $\mathrm{Ext}^1(\mathcal T_{S^{[n]}}, \mathcal T_{S^{[n]}})$ goes to infinity with $n$.
\end{rk}

\begin{rk}
Of course, our results might be apply to different sheaves. In the recent preprint \cite{Ma10}, Markman announces the construction of -- possibly twisted -- sheaves that, combined with our results, proves the Lefschetz standard conjecture in degree 2 for deformations of Hilbert schemes of K3 surfaces.
\end{rk}

\begin{rk}
 It is quite surprising that we make use of nonprojective K\"ahler varieties in these results dealing with the standard conjectures. Indeed, results like those of Voisin in \cite{Voi02} show that there can be very few algebraic cycles, whether coming from subvarieties or even from Chern classes of coherent sheaves, on general nonprojective K\"ahler varieties.
\end{rk}

\providecommand{\bysame}{\leavevmode ---\ }
\providecommand{\og}{``}
\providecommand{\fg}{''}
\providecommand{\smfandname}{et}
\providecommand{\smfedsname}{\'eds.}
\providecommand{\smfedname}{\'ed.}
\providecommand{\smfmastersthesisname}{M\'emoire}
\providecommand{\smfphdthesisname}{Th\`ese}

\end{document}